\newif\ifmicrotype
  \def\tempaa{\thesection.\arabic{equation}}%
     \def\theequation{\thesection\hbox{--}\arabic{equation}}%
\DeclareFontFamily{OT1}{ptm}{}
\DeclareFontShape{OT1}{ptm}{m}{n} { <-> ptmr}{}
\DeclareFontShape{OT1}{ptm}{m}{it}{ <-> ptmri}{}
\DeclareFontShape{OT1}{ptm}{m}{sl}{ <->ptmro}{}
\DeclareFontShape{OT1}{ptm}{m}{sc}{ <-> ptmrc}{}
\DeclareFontShape{OT1}{ptm}{b}{n} { <-> ptmb}{}
\DeclareFontShape{OT1}{ptm}{b}{it}{ <-> ptmbi}{}     
\DeclareFontShape{OT1}{ptm}{bx}{n} {<->ssub * ptm/b/n}{}
\DeclareFontShape{OT1}{ptm}{bx}{it}{<->ssub * ptm/b/it}{}
\DeclareSymbolFont{bold}{OT1}{ptm}{b}{n}
\DeclareMathAlphabet{\mathbf}{OT1}{ptm}{b}{n}  
\DeclareMathAlphabet{\mathrm}{OT1}{ptm}{m}{n}
\DeclareFontFamily{OT1}{psy}{}      
\DeclareFontShape{OT1}{psy}{m}{n}{ <-> s * [0.9] psyr}{}
\DeclareFontFamily{OMS}{ptm}{}     
\DeclareFontShape{OMS}{ptm}{m}{n}{ <8> <9> <10> gen * cmsy }{}
\DeclareFontFamily{OMS}{cmtt}{}     
\DeclareFontShape{OMS}{cmtt}{m}{n}{ <8> <9> <10> gen * cmsy }{}
\DeclareSymbolFont{emsy}{OT1}{ptm}{m}{it}
\DeclareSymbolFont{emsr}{OT1}{ptm}{m}{n}
\DeclareSymbolFont{emcmr}{OT1}{cmr}{m}{n}   
\DeclareSymbolFont{emsymb}{OT1}{psy}{m}{n}  
\DeclareMathSymbol a{\mathalpha}{emsy}{"61}
\DeclareMathSymbol b{\mathalpha}{emsy}{"62}
\DeclareMathSymbol c{\mathalpha}{emsy}{"63}
\DeclareMathSymbol d{\mathalpha}{emsy}{"64}
\DeclareMathSymbol e{\mathalpha}{emsy}{"65}
\DeclareMathSymbol f{\mathalpha}{emsy}{"66}
\DeclareMathSymbol g{\mathalpha}{emsy}{"67}
\DeclareMathSymbol h{\mathalpha}{emsy}{"68}
\DeclareMathSymbol i{\mathalpha}{emsy}{"69}
\DeclareMathSymbol j{\mathalpha}{emsy}{"6A}
\DeclareMathSymbol k{\mathalpha}{emsy}{"6B}
\DeclareMathSymbol l{\mathalpha}{emsy}{"6C}
\DeclareMathSymbol m{\mathalpha}{emsy}{"6D}
\DeclareMathSymbol n{\mathalpha}{emsy}{"6E}
\DeclareMathSymbol o{\mathalpha}{emsy}{"6F}
\DeclareMathSymbol p{\mathalpha}{emsy}{"70}
\DeclareMathSymbol q{\mathalpha}{emsy}{"71}
\DeclareMathSymbol r{\mathalpha}{emsy}{"72}
\DeclareMathSymbol s{\mathalpha}{emsy}{"73}
\DeclareMathSymbol t{\mathalpha}{emsy}{"74}
\DeclareMathSymbol u{\mathalpha}{emsy}{"75}
\DeclareMathSymbol v{\mathalpha}{emsy}{"76}
\DeclareMathSymbol w{\mathalpha}{emsy}{"77}
\DeclareMathSymbol x{\mathalpha}{emsy}{"78}
\DeclareMathSymbol y{\mathalpha}{emsy}{"79}
\DeclareMathSymbol z{\mathalpha}{emsy}{"7A}
\DeclareMathSymbol A{\mathalpha}{emsy}{"41}
\DeclareMathSymbol B{\mathalpha}{emsy}{"42}
\DeclareMathSymbol C{\mathalpha}{emsy}{"43}
\DeclareMathSymbol D{\mathalpha}{emsy}{"44}
\DeclareMathSymbol E{\mathalpha}{emsy}{"45}
\DeclareMathSymbol F{\mathalpha}{emsy}{"46}
\DeclareMathSymbol G{\mathalpha}{emsy}{"47}
\DeclareMathSymbol H{\mathalpha}{emsy}{"48}
\DeclareMathSymbol I{\mathalpha}{emsy}{"49}
\DeclareMathSymbol J{\mathalpha}{emsy}{"4A}
\DeclareMathSymbol K{\mathalpha}{emsy}{"4B}
\DeclareMathSymbol L{\mathalpha}{emsy}{"4C}
\DeclareMathSymbol M{\mathalpha}{emsy}{"4D}
\DeclareMathSymbol N{\mathalpha}{emsy}{"4E}
\DeclareMathSymbol O{\mathalpha}{emsy}{"4F}
\DeclareMathSymbol P{\mathalpha}{emsy}{"50}
\DeclareMathSymbol Q{\mathalpha}{emsy}{"51}
\DeclareMathSymbol R{\mathalpha}{emsy}{"52}
\DeclareMathSymbol S{\mathalpha}{emsy}{"53}
\DeclareMathSymbol T{\mathalpha}{emsy}{"54}
\DeclareMathSymbol U{\mathalpha}{emsy}{"55}
\DeclareMathSymbol V{\mathalpha}{emsy}{"56}
\DeclareMathSymbol W{\mathalpha}{emsy}{"57}
\DeclareMathSymbol X{\mathalpha}{emsy}{"58}
\DeclareMathSymbol Y{\mathalpha}{emsy}{"59}
\DeclareMathSymbol Z{\mathalpha}{emsy}{"5A}
\DeclareMathSymbol{\bullet}{\mathalpha}{emsymb}{"B7}
\DeclareMathSymbol{\regis}{\mathalpha}{emsymb}{"D2}
\def\Bullet{\leavevmode\unkern{$\m@th\bullet$}\kern.32em\ignorespaces}
\def\Regis{\leavevmode\raise.5ex\hbox{$\m@th\regis$}}
\DeclareMathSymbol +{\mathbin}{emcmr}{`+}
\DeclareMathSymbol ={\mathrel}{emcmr}{`=}  
\DeclareMathSymbol{\Gamma}{\mathalpha}{emcmr}{"00}
\DeclareMathSymbol{\Delta}{\mathalpha}{emcmr}{"01}
\DeclareMathSymbol{\Theta}{\mathalpha}{emcmr}{"02}
\DeclareMathSymbol{\Lambda}{\mathalpha}{emcmr}{"03}
\DeclareMathSymbol{\Xi}{\mathalpha}{emcmr}{"04}
\DeclareMathSymbol{\Pi}{\mathalpha}{emcmr}{"05}
\DeclareMathSymbol{\Sigma}{\mathalpha}{emcmr}{"06}
\DeclareMathSymbol{\Upsilon}{\mathalpha}{emcmr}{"07}
\DeclareMathSymbol{\Phi}{\mathalpha}{emcmr}{"08}
\DeclareMathSymbol{\Psi}{\mathalpha}{emcmr}{"09}
\DeclareMathSymbol{\Omega}{\mathalpha}{emcmr}{"0A}
\newtheorem*{rep@theorem}{\rep@title}
\newcommand{\newreptheorem}[2]{%
\newenvironment{rep#1}[1]{%
\def\rep@title{#2 \ref{##1}}%
\begin{rep@theorem}}%
{\end{rep@theorem}}}
\newtheorem{theorem}{Theorem}[section]
\newtheorem{lemma}[theorem]{Lemma}
\newtheorem{lem}[theorem]{Lemma}
\newtheorem{prop}[theorem]{Proposition}
\newtheorem{corollary}[theorem]{Corollary}
\theoremstyle{definition}
\newtheorem{definition}[theorem]{Definition}
\newtheorem{defi}[theorem]{Definition}
\newtheorem{remark}[theorem]{Remark}
\newtheorem{example}[theorem]{Example}
\newtheorem{observation}[theorem]{Observation}
\newtheorem{notation}[theorem]{Notation}
\def\é{\'e}
\def\è{\`e}
\def\ê{\^e}
\def\à{\`a}
\def\â{\^a}
\def\î{\^i}
\def\ô{\^o}
\def\ù{\`u}
\def\û{\^u}
\def\É{\'E}
\def\È{\`E}
\def\Ê{\^E}
\def\À{\`A}
\def\Ô{\^O}
\def\ç{\c{c}}
\def\Z{{\mathbb{Z}}}
\def\N{{\mathbb{N}}}
\def\co{{\colon \thinspace}}
\def\<{\langle}
\def\>{\rangle}
\def\-{\setminus}
\def\inv{^{-1}}
\renewcommand{\phi}{{\varphi}}
\newcommand\B{\mathcal B}
\newcommand\nintr[1]{\rho_n^{#1}}
\newcommand\muR{\mu_R}
\newcommand\muC{\mu_M} 
\newcommand\mupA{\mu_{pA}}
\newcommand\arobase{\alpha} 
\newcommand\init[1]{S(#1)} 
\newcommand\final[1]{F(#1)} 
\newcommand\g{\lambda}
\newcommand\m{\mu}
\newcommand\lc{\ell_c} 
\newcommand\pgcd[2]{#1 \wedge #2}
\newcommand\card[1]{\#(#1)}
\newcommand\boule[1]{\mathbf{B}(#1)}
\DeclareMathOperator\fn{NF_{\it l}}
\title[Genericity of pseudo-Anosov braids II]{On the genericity of pseudo-Anosov braids II:\\ conjugations to rigid braids} 
\author{Sandrine Caruso and Bert Wiest}
\date{}
\begin{document}
\begin{abstract}
We prove that generic elements of braid groups are pseudo-Anosov, in the following sense: in the Cayley graph of the braid group with $n\geqslant 3$ strands, with respect to Garside's generating set, we prove that the proportion of pseudo-Anosov braids in the ball of radius~$l$ tends to~$1$ exponentially quickly as $l$ tends to infinity. Moreover, with a similar notion of genericity, we prove that for generic pairs of elements of the braid group, the conjugacy search problem can be solved in quadratic time. The idea behind both results is that generic braids can be conjugated ``easily'' into a rigid braid.
\end{abstract}
\maketitle 

\section{Introduction}

In the recent article \cite{CarusoPA}, S.\ Caruso proved the following result. For a fixed number of strands $n$, consider the ball of radius~$l$ and center $1$ in the Cayley graph of the braid group $\B_n$, with generators the simple braids. Then for sufficiently large $l$, among the elements of this ball, the proportion of pseudo-Anosov braids is bounded below by a positive constant which does not depend on $l$ (but it might depend on $n$).
A key lemma in this paper states that among the \emph{rigid} braids with canonical length equal to~$l$, the proportion of pseudo-Anosov braids tends to~1 as $l$~tends to infinity. 

The aim of the present paper is to prove the following stronger result:

\begin{reptheorem}{T:main}
Consider the ball of radius~$l$ and center $1$ in the Cayley graph of the braid group~$\B_n$, with generators the simple braids. Then the proportion of pseudo-Anosov braids among the elements of this ball tends to~1 as $l$ tends to infinity. Moreover, this convergence happens exponentially fast. 
\end{reptheorem}

In fact, we shall prove a slightly stronger technical result: in the statement of the theorem, one can replace ``pseudo-Anosov braids'' by ``braids which admit a non-intrusive conjugation to a rigid pseudo-Anosov braid''.

The plan of the article is as follows: in Section~\ref{definitions}, we recall some classical definitions. In Section~\ref{nonintrusives}, we state the fact that, among braids with a fixed infimum, the proportion of those admitting a non-intrusive conjugation to a rigid braids tends to~1 exponentially quickly as the canonical length tends to infinity. This fact will be proven in Section~\ref{bloquante}, using the notion of a blocking braid. We complete the proof of the main theorem in Section~\ref{genericite}. In Section~\ref{S:ConjugRapide} we prove a related result, namely that the conjugacy problem in braid groups has generically a fast solution. Finally, we present some other consequences and conjectures arising from our results and techniques in Section~\ref{consequences}.

\section{Definitions}\label{definitions}

We recall that the Nielsen-Thurston classification theorem states that every element of $B_n$ is exactly one of the following: periodic, or reducible non-periodic, or pseudo-Anosov. In the context of braid groups, we must use the following definition of \emph{periodic}: a braid $x\in\B_n$ is periodic if and only if there exist non-zero integers $m$ and $l$ such that $x^m=\Delta^l$, where $\Delta = (\sigma_1 \cdots \sigma_{n-1})(\sigma_1\cdots\sigma_{n-2}) \cdots (\sigma_1\sigma_2)\sigma_1$. (Geometrically, $\Delta$~corresponds to a half-twist along the boundary of the disk. The center of~$\B_n$ is generated by the full twist~$\Delta^2$.)

\bigskip

We will also use some elements of Garside theory, in the classical case of braid groups, which we recall now. For more details, the reader can consult \cite{EM}, or \cite{DDGM} for the general theory. 

The group $\B_n$ is equipped with a partial order relation~$\preccurlyeq$, defined as follows: $x \preccurlyeq y$ if and only if $x^{-1}y \in \B_n^+$, the monoid of positive braids (i.e.\ only positive crossings). If $x \preccurlyeq y$, we say that $x$ is a \emph{prefix} of~$y$. Any two elements $x,y\in \B_n$ have a unique greatest common prefix, denoted $\pgcd x y$.

Similarly we define $\succcurlyeq$ as follows: $x \succcurlyeq y$ if and only if $x y^{-1} \in \B_n^+$. Notice that $x \succcurlyeq y$ is not equivalent to $y \preccurlyeq x$. If $x \succcurlyeq y$, we say that $y$~is a \emph{suffix} of~$x$.

The elements of the set $\{x \in \B_n, 1 \preccurlyeq x \preccurlyeq \Delta\}$ are called \emph{simple braids}, or \emph{permutation braids}.
Throughout this paper, we shall use the set of simple braids as the generating set of $\B_n$. The ball of radius~$l$ and center $1$ in the Cayley graph of~$\B_n$ with respect to this generating set will be denoted~$\boule l$.

\begin{defi}[left-weighting]
Let $s_1$, $s_2$ be two simple braids in $\B_n$. We say that $s_1$ and $s_2$ are \emph{left-weighted}, or that the pair $(s_1,s_2)$ is left-weighted, if there does not exist any generator $\sigma_i$ such that $s_1 \sigma_i$ and $\sigma_i^{-1} s_2$ are both still simple.
\end{defi}

\begin{defi}[starting set, finishing set]
Let $s \in \B_n$ be a simple braid. We call \emph{starting set of $s$} the set $\init s = \{i, \ \sigma_i \preccurlyeq s\}$ and \emph{finishing set of $s$} the set $\final{s} = \{i, \ s \succcurlyeq \sigma_i \}$.
\end{defi}

\begin{remark}
Two simple braids $s_1$ and $s_2$ are left-weighted if and only if $\init{s_2} \subset \final{s_1}$.
\end{remark}

\begin{prop}
Let $x \in \B_n$. There exists a unique decomposition $x = \Delta^p x_1\cdots x_r$ such that $x_1, \ldots,x_r$ are simple braids, distinct from $\Delta$ and $1$, and such that the pairs $(x_i,x_{i+1})$ are left-weighted for all $i = 1, \ldots, r-1$.
\end{prop}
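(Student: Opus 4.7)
The plan is to establish existence by a greedy construction and uniqueness by an intrinsic characterization of each factor.

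For existence, first I would write $x = \Delta^p \beta$ with $\beta \in \B_n^+$: this is possible because multiplying by a high enough power of the central element $\Delta^2$ turns any braid into a positive one, and one may then choose $p$ maximal, equivalently so that $\Delta \not\preccurlyeq \beta$. Next I decompose $\beta$ greedily. The set of simple braids forms a lattice under $\preccurlyeq$, so every nontrivial positive braid admits a unique maximal simple prefix. Setting $x_1$ to be the maximal simple prefix of $\beta$, writing $\beta = x_1 \beta'$, and iterating yields $\beta = x_1 x_2 \cdots x_r$ with each $x_i \neq 1$. That no $x_i$ equals $\Delta$ follows from the existence of the automorphism $\tau(s) = \Delta s \Delta^{-1}$ of $\B_n^+$ (which permutes the standard generators and in particular preserves positivity): one could commute any $\Delta$-factor all the way to the left to obtain $\beta = \Delta \cdot \tau(x_1) \cdots \tau(x_{i-1}) x_{i+1} \cdots x_r$, contradicting the maximality of $p$. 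The left-weighting of $(x_i, x_{i+1})$ is also automatic: if some generator $\sigma_j$ satisfied $\sigma_j \in \init{x_{i+1}}$ and $x_i \sigma_j$ simple, then $x_i \sigma_j$ would be a simple prefix of $x_i x_{i+1} \cdots x_r$ strictly larger than $x_i$, contradicting the maximality used to choose $x_i$.

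For uniqueness, the integer $p$ is intrinsically characterized as the largest integer with $\Delta^p \preccurlyeq x$, and so is determined by $x$. It remains to show that the positive part $\beta$ admits a unique left-weighted decomposition, for which the key claim is that in any such decomposition $\beta = y_1 \cdots y_s$ the factor $y_1$ must coincide with the maximal simple prefix $m$ of $\beta$. One inclusion $y_1 \preccurlyeq m$ is clear. For the reverse, assuming $y_1 \prec m$, pick a generator $\sigma_j$ with $y_1 \sigma_j \preccurlyeq m$; then $y_1 \sigma_j$ is simple and $\sigma_j \preccurlyeq y_2 \cdots y_s$. If $\sigma_j \preccurlyeq y_2$, the left-weighting condition $\init{y_2} \subset \final{y_1}$ forces $\sigma_j \in \final{y_1}$, so that $y_1 \sigma_j$ contains the subword $\sigma_j^2$ and cannot be simple --- a contradiction. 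The case $\sigma_j \not\preccurlyeq y_2$ is handled by propagating $\sigma_j$ leftward through successive factors via the standard Garside divisibility lemma in $\B_n^+$. Once $y_1 = x_1$ is established, cancelling and iterating gives $r = s$ and $x_i = y_i$ throughout.

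The principal obstacle is the uniqueness step, and more precisely the propagation argument: showing that a generator dividing a product of simples is, under the left-weighting hypothesis, forced to divide the first factor. This rests on the lattice structure of $\B_n^+$ and on the square-freeness of simple braids with respect to the atoms $\sigma_i$, both classical consequences of Garside theory as developed in \cite{EM} and \cite{DDGM}.
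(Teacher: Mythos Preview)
The paper does not actually prove this proposition: it is recalled in Section~\ref{definitions} as a standard fact of Garside theory, with the reader referred to \cite{EM} and \cite{DDGM} for details. Your argument is essentially the classical greedy-normal-form proof found in those references, and it is correct.

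The only place that deserves one more word of care is the case ``$\sigma_j \not\preccurlyeq y_2$'' in the uniqueness step. Rather than an unspecified ``propagation via the standard Garside divisibility lemma'', the clean way to close this is by induction on~$s$: assuming the result for shorter left-weighted products, one has $y_2 = (y_2\cdots y_s)\wedge\Delta$, so $\sigma_j \preccurlyeq y_2\cdots y_s$ already forces $\sigma_j \preccurlyeq y_2$, and the problematic case simply does not occur. With that adjustment your sketch matches the standard proof in \cite{EM}.
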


\begin{defi}[left normal form]
In the previous proposition, the writing $x = \Delta^p x_1 \cdots x_r$ is called the \emph{left normal form} of $x$, $p$ is called the \emph{infimum} of $x$ and is denoted by $\inf(x)$, $p+r$ is the \emph{supremum} of $x$ and is denoted by $\sup(x)$, and $r$ is called the \emph{canonical length} of~$x$, and denoted~$\lc(x)$.

Furthermore, if $r \geqslant 1$, we denote by $\iota(x) = \tau^{-p}(x_1)$ 
the \emph{initial factor} of $x$, where $\tau$ denotes the conjugation by~$\Delta$, i.e.\ $\tau(x)=\Delta^{-1} x \Delta$. (In particular $\iota(x) = x_1$ if $p$ is even, $\iota(x) = \Delta x_1 \Delta^{-1}$ if $p$ is odd.) We denote $\phi(x) = x_r$ the \emph{final factor} of~$x$.
\end{defi}

\begin{defi}[rigidity]
A braid $x$ of positive canonical length is said to be \emph{rigid} if the pair $(\phi(x),\iota(x))$ is left-weighted.
\end{defi}

Finally, we mention that at several key points in the present paper we shall use the article~\cite{CarusoPA}, and particularly its asymptotic estimates. For two number sequences~$(u_l)$ and $(v_l)$, we say that $u_l$ is of the order of $v_l$ if the sequences $(\frac{u_l}{v_l})$ and $(\frac{v_l}{u_l})$ are bounded.

\section{Non-intrusive conjugations}\label{nonintrusives}

\begin{definition}\label{defi:nonintrusive}
Let $x$ be a braid with normal form $x=\Delta^{\inf(x)}x_1\cdot\dots\cdot x_l$. A conjugation of $x$ is \emph{non-intrusive} if the normal form of the conjugated braid contains the subword  $x_{2 \cdot \lceil\frac{l}{5}\rceil+1}\cdots x_{l-2\cdot\lceil\frac{l}{5}\rceil}$.
\end{definition}

In other words, a conjugation of $x$ is non-intrusive if the middle fifth of the normal form of~$x$ still appears in the normal form of the conjugate.

\begin{example}\label{ex:conjnonintr}
Let~$x$ be the following braid with 4 strands and of canonical length~5:
$$x = \sigma_2 \sigma_3 \sigma_2 \sigma_1 \cdot \sigma_1 \sigma_3 \sigma_2 \sigma_1 \cdot \sigma_1 \sigma_2 \sigma_1 \sigma_3 \sigma_2 \cdot \sigma_3 \sigma_2 \sigma_1 \sigma_3 \cdot \sigma_1 \sigma_3 \sigma_2 \sigma_1.$$ 
Its middle fifth consists of the single factor $\sigma_1 \sigma_2 \sigma_1 \sigma_3 \sigma_2$. Let $\tilde x$ be its conjugate by the last two factors $\sigma_3 \sigma_2 \sigma_1 \sigma_3 \cdot \sigma_1 \sigma_3 \sigma_2 \sigma_1$ :
\begin{align*}
\tilde x &= \sigma_3 \sigma_2 \sigma_1 \sigma_3 \cdot \sigma_1 \sigma_3 \sigma_2 \sigma_1 \cdot \sigma_2 \sigma_3 \sigma_2 \sigma_1 \cdot \sigma_1 \sigma_3 \sigma_2\sigma_1 \cdot \sigma_1 \sigma_2 \sigma_1 \sigma_3 \sigma_2 \\&= \Delta \cdot \sigma_1 \sigma_2 \sigma_3 \sigma_1 \cdot \sigma_1 \sigma_3 \cdot \sigma_1 \sigma_3 \sigma_2 \sigma_1 \cdot \sigma_1 \sigma_2 \sigma_1 \sigma_3 \sigma_2 \text{ \ \ \ (in normal form)}\end{align*}
The conjugation from $x$ to $\tilde x$ is non intrusive, because $\tilde x$~contains the factor $\sigma_1\sigma_2 \sigma_1 \sigma_3 \sigma_2$ in its normal form.
\end{example}

\begin{notation}
We denote
$$
\B_n^{\epsilon,l}=\{x\in \B_n \ | \  \inf(x)=\epsilon, \lc(x)=l \}
$$
and $\nintr{(\epsilon,l)}$ the proportion, among the elements of $\B_n^{\epsilon,l}$, of braids which admit a non-intrusive conjugation to a rigid braid.
\end{notation}

We observe that for every $l\in\N$ and $\epsilon\in\Z$ we have $\nintr{(\epsilon,l)}=\nintr{(\epsilon+2,l)}$ -- thus $\nintr{(\epsilon,l)}$ depends only on~$n$, on~$l$, and on the parity of~$\epsilon$.

\begin{prop}\label{P:ConjNonintr}
There exists a constant $\muR \in (0,1)$ (which depends on~$n$) such that $\nintr{(\epsilon,l)}\geqslant 1-\muR^l$. 
\end{prop}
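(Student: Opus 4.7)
The plan is to isolate the middle fifth of the normal form of $x=\Delta^{\epsilon}x_1\cdots x_l$ from any disturbance caused by a conjugation that rigidifies $x$. For this I would introduce a notion of \emph{blocking braid}: a simple factor whose starting/finishing set interacts with its neighbours in such a way that, once it appears in the normal form, it acts as a wall---no cycling operation performed further to one side can propagate its rewriting past that factor. The anticipated rigidification algorithm is the standard cycling/decycling procedure: repeatedly replace $\Delta^{\epsilon}x_1\cdots x_l$ by $\Delta^{\epsilon}x_2\cdots x_l\tau^{-\epsilon}(x_1)$ (conjugation by $\iota(x)$) and symmetrically from the right. It is classical that after finitely many such steps one enters the sliding circuit, and if any rigid conjugate exists in the conjugacy class then one is reached; the non-trivial claim is that this can be arranged without touching the middle fifth.

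More precisely, the plan is to prove two combinatorial facts in Section~\ref{bloquante}. First, if a blocking factor $x_j$ occurs somewhere in the initial window $1\leqslant j\leqslant 2\lceil l/5\rceil$, then however many cyclings are performed from the left, the rewriting stops at position $j$, so the factors $x_{2\lceil l/5\rceil+1},\ldots,x_l$ remain literally unchanged in the normal form of the intermediate conjugate. A mirror statement using decycling from the right, with a blocking factor in the terminal window $l-2\lceil l/5\rceil+1\leqslant j\leqslant l$, guarantees the preservation of all factors up to position $l-2\lceil l/5\rceil$. Applying both in either order produces a conjugate whose normal form still contains $x_{2\lceil l/5\rceil+1}\cdots x_{l-2\lceil l/5\rceil}$, so the overall conjugation is non-intrusive; a finite further amount of cycling/decycling, confined to the outer two fifths, then yields rigidity.

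For the exponential density estimate, the set $\B_n^{\epsilon,l}$ is in bijection with the set of walks of length $l-1$ in the finite directed graph on non-$\Delta$ simple braids whose edges encode the left-weighted condition $\init{s'}\subset\final{s}$. I would transfer-matrix count as in~\cite{CarusoPA}: the number of such walks grows like $C\lambda^l$ where $\lambda$ is the Perron eigenvalue, and the proportion of walks that visit a blocking vertex in a given window of length $k$ tends to $1$ as $1-\rho^k$ with $\rho<1$, provided the Perron eigenvector gives positive weight to blocking vertices. Applying this to each of the two windows of length $\sim l/5$ and taking the union bound gives $\nintr{(\epsilon,l)}\geqslant 1-\muR^{l}$ for some $\muR\in(0,1)$.

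The main obstacle is the definition and verification of \emph{blocking} factors. The condition must be strong enough that no sequence of left-cyclings, each of which can in principle trigger a cascade of normal-form re-weightings, ever penetrates past a blocking factor---despite the fact that a single cycling can reshuffle several consecutive factors through $\Delta$-absorption. At the same time, the condition must be weak enough that blocking factors form a set of positive Perron--Frobenius weight in the transfer graph, for otherwise the probabilistic argument collapses. Balancing these two constraints, and then formalising the "wall" property, is the technical heart of Section~\ref{bloquante}; once that is established, both the rigidification and the counting reduce to routine applications of the framework of~\cite{CarusoPA}.
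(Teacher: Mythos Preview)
Your intuition about blocking configurations and Perron--Frobenius counting is sound, but the rigidification mechanism via iterated cycling is misconceived, and this is a genuine gap.

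A single cycling $\Delta^\epsilon x_1\cdots x_l\mapsto\Delta^\epsilon x_2\cdots x_l\,\tau^{-\epsilon}(x_1)$ creates a non-normal junction at the \emph{right} end, between $x_l$ and $\tau^{-\epsilon}(x_1)$; the re-weighting cascade then propagates from right to left. A blocking factor sitting in the \emph{initial} window $1\le j\le 2\lceil l/5\rceil$ therefore offers no protection to the middle fifth: the cascade would have to traverse all of $P_3$ before it could even reach position~$j$. Symmetrically, decycling disturbs the left end. Your picture that ``left cyclings only rewrite on the left'' is false, and with it the claim that the middle fifth survives the procedure. There is also no reason why cycling/decycling should ever land on a rigid element: entering the sliding circuit is guaranteed, rigidity is not, so the phrase ``a finite further amount of cycling/decycling \ldots\ then yields rigidity'' is unjustified.

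The paper does something quite different: a \emph{single} conjugation by the last two fifths $P'_4P'_5$, giving $\tilde x=\Delta^\epsilon P_4P_5\cdot P_1P_2\cdot P_3$. The unique bad junction is $P_5\cdot P_1$, lying entirely to the left of $P_3$, and one proves directly that for generic~$x$ the normal form of $P_4P_5P_1P_2$ has unchanged first and last factors, so $\tilde x$ is \emph{already} rigid and $P_3$ is untouched. Blocking braids (multi-factor words, not single simples) in $P_2$ and, via $\partial$-symmetry, in $\partial P_4$ confine the cascade---\emph{except} in the residual event that $P_1$ is wholly absorbed as a prefix of $\partial P_5$; a separate estimate (Lemma~\ref{L:PrefixRare}) is then needed to show that this absorption is itself exponentially rare. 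Your outline omits this case, which is not handled by the mere presence of blocking braids.
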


The aim of the next section is to prove this proposition.

\section{Blocking braids and the proof of Proposition~\ref{P:ConjNonintr}}\label{bloquante}

\begin{notation}
If $X$ and $Y$ are two braids, and if $Y$ is of infimum~$0$, then we denote $\fn(X \cdot Y)$ the word in (left) normal form representing the product $X\cdot Y$.

We say that $X\cdot Y$ \emph{is in normal form} if $\fn(X \cdot Y)$ is equal, as a word, to the normal form of $X$, followed by the normal form of $Y$.

If $s_1$ is the last factor of the normal form of~$X$, and $s_2$ is the first factor of the normal form of~$Y$, we are going to denote $\final{X} = \final{s_1}$ and $\init{Y} = \init{s_2}$. In particular, $X \cdot Y$ is in normal form if and only if $\init{Y} \subset \final{X}$.
\end{notation}

Let~$x$ be a braid of infimum $\epsilon\in\Z$, and of canonical length $l \geqslant 5$. We introduce some more notation. We cut the normal form representative of~$x$ (other than the initial power of~$\Delta$) into five pieces of roughly equal size, each of them in normal form:
\begin{gather*}
P_1(x)=x_1\cdots x_{\lceil\frac{l}{5}\rceil}, \quad P_2(x)=x_{\lceil\frac{l}{5}\rceil+1}\cdots x_{2\cdot\lceil\frac{l}{5}\rceil}, \\
P_3(x)=x_{2\cdot\lceil\frac{l}{5}\rceil+1}\cdots x_{l-2\cdot\lceil\frac{l}{5}\rceil}, \\
P'_4(x)=x_{l-2\cdot\lceil\frac{l}{5}\rceil+1}\cdots x_{l-\lceil\frac{l}{5}\rceil}, \quad P'_5(x)=x_{l-\lceil\frac{l}{5}\rceil+1}\cdots x_l.
\end{gather*}
Notice that $P_1(x), P_2(x), P'_4(x)$ and $P'_5(x)$ have exactly equal length. The word $P_3(x)$ is the ``middle fifth'' subword mentioned in the previous section. Finally, we denote
$$P_4(x)=\tau^{\epsilon} (P'_4(x)) \text{ \ et \ } P_5(x)=\tau^{\varepsilon} (P'_5(x)).$$
If there is no ambiguity, we shall simply write $P_i$ instead of $P_i(x)$.
The braid~$x$ can always be conjugated to
$$
\tilde{x}=\Delta^\epsilon \cdot P_4\cdot P_5\cdot P_1\cdot P_2\cdot P_3
$$
and this writing is almost in normal form: the only place where two successive letters are not necessarily left-weighted is the transition from the last letter of~$P_5$ to the first letter of~$P_1$. All other pairs of successive letters are left-weighted, even $\phi(P_3)$ followed by $\iota(\Delta^\varepsilon P_4)$ (the last letter followed by the first). 
For this reason we also have $\iota(P_4)=\iota(P_4\cdot P_5)$ and $\phi(P_1\cdot P_2)=\phi(P_2)$.

\begin{observation}\label{O:RigideCritere}
Consider the normal form of~$P_4\cdot P_5\cdot P_1\cdot P_2$. If 
\begin{equation}\label{eq:O:RigideCritere:iota}
\iota(P_4\cdot P_5\cdot P_1\cdot P_2)=\iota(P_4\cdot P_5) 
\end{equation}
and
\begin{equation}\label{eq:O:RigideCritere:phi}
\phi(P_4\cdot P_5\cdot P_1\cdot P_2)=\phi(P_1\cdot P_2)
\end{equation}
then the braid~$\tilde x$ is non-intrusively conjugate to~$x$ (because the normal form of~$\tilde x$ will contain $P_3$ as a subword), and it is rigid.
\end{observation}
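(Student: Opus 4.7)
The plan is to read off the normal form of $\tilde x=\Delta^\epsilon\cdot W$ with $W:=P_4P_5P_1P_2P_3$ directly from the two hypotheses. First I would record that every pair of consecutive simple factors in $W$ is left-weighted, \emph{except possibly} at the junction $P_5\to P_1$: the internal junctions of each $P_i$ and the junction $P_4\to P_5$ are inherited from the normal form of $x$ via the automorphism $\tau^\epsilon$ (which preserves left-weightedness and sends simple factors $\neq\Delta$ to simple factors $\neq\Delta$), and the junction $P_2\to P_3$ is literally a junction of the normal form of $x$. Consequently the normal form of $W$ differs from the word $W$ only by a local rewriting near the $P_5\to P_1$ junction, and the problem reduces to controlling how this rewriting interacts with $P_3$ on the right and with the prefix $\Delta^\epsilon$ on the left.

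Using hypothesis~\eqref{eq:O:RigideCritere:phi}, the last simple factor of the normal form of $P_4P_5P_1P_2$ is exactly $\phi(P_1P_2)=x_{2\lceil l/5\rceil}$. Since $(x_{2\lceil l/5\rceil},x_{2\lceil l/5\rceil+1})$ is left-weighted (as an adjacent pair of the normal form of $x$) and $\iota(P_3)=x_{2\lceil l/5\rceil+1}$, concatenating the normal form of $P_4P_5P_1P_2$ with the literal word $P_3$ already yields a normal form. Hence the normal form of $W$ is this concatenation, and the normal form of $\tilde x=\Delta^\epsilon W$ is obtained from it merely by increasing the $\Delta$-exponent by $\epsilon$ without altering the sequence of simple factors. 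In particular the block $P_3=x_{2\lceil l/5\rceil+1}\cdots x_{l-2\lceil l/5\rceil}$ appears as a subword of the normal form of $\tilde x$, so the conjugation from $x$ to $\tilde x$ is non-intrusive.

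For rigidity I would then compute $\phi(\tilde x)$ and $\iota(\tilde x)$. By the previous step, $\phi(\tilde x)=\phi(P_3)=x_{l-2\lceil l/5\rceil}$. For the initial factor, hypothesis~\eqref{eq:O:RigideCritere:iota} together with the fact that $P_4P_5=\tau^\epsilon(P'_4P'_5)$ is already in normal form with infimum $0$ gives $\iota(P_4P_5P_1P_2)=\iota(P_4P_5)=\tau^\epsilon(x_{l-2\lceil l/5\rceil+1})$; tracking the $\Delta^\epsilon$ prefix through to $\tilde x$ and using $\tau^2=\mathrm{id}$ then yields $\iota(\tilde x)=x_{l-2\lceil l/5\rceil+1}$. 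Rigidity of $\tilde x$ thus reduces to the pair $(x_{l-2\lceil l/5\rceil},x_{l-2\lceil l/5\rceil+1})$ being left-weighted, which is automatic since these are adjacent factors of the normal form of $x$. The only delicate point in writing the argument out cleanly is bookkeeping the $\tau$-action when moving $\Delta^\epsilon$ past the leading simple factors of $W$, but the identity $\tau^2=\mathrm{id}$ handles both parities of $\epsilon$ uniformly, so there is no real obstacle beyond this bookkeeping.
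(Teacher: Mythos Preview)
Your proposal is correct and follows exactly the approach the paper intends: the paper states this as an ``observation'' and gives only the intuitive explanation (that the rewriting needed to put $P_4P_5P_1P_2$ into normal form is confined to the interior and does not touch the extremities), while you spell out the details. One minor remark: you do not actually need $\tau^2=\mathrm{id}$ for the $\iota(\tilde x)$ computation, since if $\inf(P_4P_5P_1P_2)=k$ then $\iota(\tilde x)=\tau^{-(\epsilon+k)}(w_1)=\tau^{-\epsilon}\bigl(\iota(P_4P_5P_1P_2)\bigr)=\tau^{-\epsilon}\tau^{\epsilon}(x_{l-2\lceil l/5\rceil+1})$ and the exponents cancel exactly, with no parity argument required.
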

Intuitively, the hypothesis of Observation~\ref{O:RigideCritere} is that the given word representing~$\tilde x$ may not quite be in normal form, but that the modifications necessary in order to transform it into normal form are confined inside the word, and do not touch its extremities (up to a possible appearance of some factors $\Delta$, and up to conjugation of the initial factors of~$\tilde x$ by these factors~$\Delta$.) 

For instance, in Example~\ref{ex:conjnonintr}, the hypotheses of Observation~\ref{O:RigideCritere} are satisfied, and the conjugate $\tilde x$ is indeed rigid.

Our aim now is to prove that the proportion of braids~$x$ for which the hypotheses of Observation~\ref{O:RigideCritere} are satisfied tends to~1 when the length of~$x$ tends to infinity. In order to achieve this, we are going to observe that certain braids ``block the chain reaction of the transformation into normal form'', and that these ``blocking braids''\ have excellent chances of actually appearing.

\begin{figure}[htb]
\centerline{\includegraphics[scale=0.73]{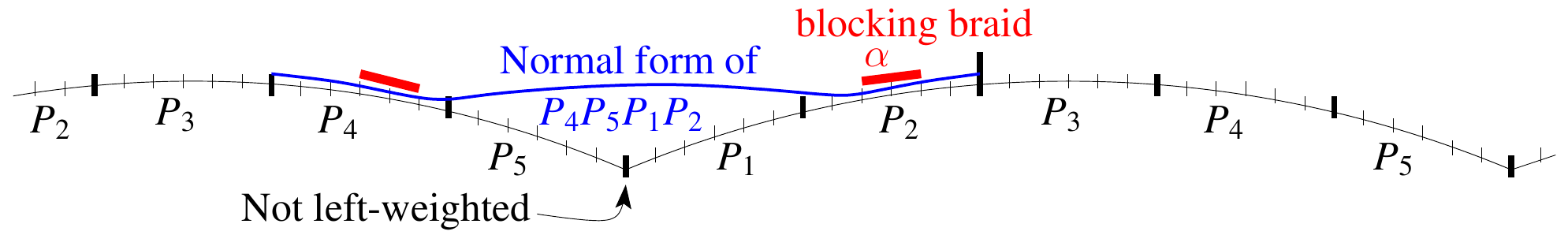}}
\caption{The stategy of the proof: this picture takes place in the Cayley graph of~$\B_n$. The braid~$x$ lifts to a bi-infinite path. The picture shows the generic situation: the last factor of the normal form of $P_4P_5P_1P_2$ coincides with the last factor of~$P_2$, and its initial factor (except for $\Delta$) coincides with the first factor of~$P_4$.} \label{F:Stratégie}
\end{figure}

We recall that for a simple braid~$s$, the \emph{complement} $\partial s$ is the braid $\partial s = s^{-1} \Delta$. We extend this definition to arbitrary braids~$y$ by the formula 
$$\partial y=y^{-1}\cdot \Delta^{\sup(y)}.$$
This is the unique braid such that $y\cdot \partial y=\Delta^{\sup(y)}$. 
If the normal form of~$y$ is $\Delta^{\inf y} y_1\cdots y_l$ then the normal form of $\partial y$ is $\overline y_l\cdots \overline y_1$, where $\overline y_{l-i}=\tau^{-i}(\partial y_{l-i})$ for $i=0,\ldots, l-1$ (i.e.\  $\overline y_{l-i}=y_{l-i}^{-1}\cdot \Delta=\partial y_{l-i}$ if $i$~is even and $\overline y_{l-i}=\Delta\cdot y_{l-i}^{-1}=\tau^{-1}(\partial y_{l-i})$ if $i$~is odd). In particular, $\inf(\partial y)=0$ and $\sup(\partial y)=\lc(y)$. 
We also calculate, for later reference, that
\begin{equation}\label{E:PhiCompl}
\phi(\partial y)=\tau^{-\sup(y)+1}(\partial\iota(y))
\end{equation}
because $\phi(\partial y)=\overline y_1=\tau^{-l+1}(\partial y_1)=\tau^{-l+1}(\partial \tau^{-\inf(y)}\iota(y))=\tau^{-\inf(y)-l+1}(\partial\iota(y))$.

Now, the normal form representative of $P_4\cdot P_5\cdot P_1\cdot P_2$ is 
\begin{equation}\label{eq:FNP4P5P1P2}
\fn\left(P_4\cdot P_5\cdot P_1\cdot P_2^{\phantom{1}}\right) = \fn\left(P_4\cdot P_5^{\phantom 1}\cdot t\right)\ \cdot\fn\left(t^{-1}\cdot P_1\cdot P_2\right)
\end{equation}
where
$$
t=\pgcd{(P_1\cdot P_2)}{\partial(P_4\cdot P_5)}.
$$
We also notice that, since $P_4 \cdot P_5 \cdot t \cdot t^{-1} \cdot \partial(P_4 \cdot P_5) = \Delta^{\sup(P_4 P_5)} = \Delta^{\sup(P_4 P_5 t)}$, the following formula holds:
\begin{equation}\label{eq:pgcdFN}
\partial(P_4\cdot P_5\cdot t) = t^{-1}\cdot \partial(P_4\cdot P_5).
\end{equation}
This suggests a way of studying the normal form of $P_4\cdot P_5\cdot P_1\cdot P_2$ in which $P_1\cdot P_2$ and $\partial(P_4\cdot P_5)$ play strictly symmetric roles: 

\begin{lem}\label{L:RigideCritereSym}
Still denoting $t=\pgcd{(P_1\cdot P_2)}{\partial(P_4\cdot P_5)}$, suppose that
\begin{equation}\label{eq:tP1P2}
\phi(t^{-1}\cdot P_1\cdot P_2)=\phi(P_1\cdot P_2),
\end{equation}
and
\begin{equation}\label{eq:tP4P5}
\phi(t^{-1}\cdot \partial(P_4\cdot P_5))=\phi(\partial(P_4\cdot P_5)).
\end{equation}
Then the hypotheses of Observation~\ref{O:RigideCritere} are satisfied, and the braid~$x$ is non-intrusively conjugate to a rigid braid.
\end{lem}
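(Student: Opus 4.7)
My plan is to derive the two equations \eqref{eq:O:RigideCritere:iota} and \eqref{eq:O:RigideCritere:phi} directly from the symmetric hypotheses \eqref{eq:tP1P2} and \eqref{eq:tP4P5}, and then invoke Observation~\ref{O:RigideCritere}. The strategy is to read the initial and final factors of $P_4\cdot P_5\cdot P_1\cdot P_2$ off the decomposition \eqref{eq:FNP4P5P1P2}, exploiting the symmetric roles played by $P_1\cdot P_2$ and $\partial(P_4\cdot P_5)$ through the common prefix $t$.

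First I would use \eqref{eq:FNP4P5P1P2}: the right-hand side is the concatenation of two normal forms whose junction is, by the defining property of $t$, already left-weighted, so it is itself in left normal form. One reads off
$$\phi(P_4\cdot P_5\cdot P_1\cdot P_2)=\phi(t^{-1}\cdot P_1\cdot P_2)\quad\text{and}\quad \iota(P_4\cdot P_5\cdot P_1\cdot P_2)=\iota(P_4\cdot P_5\cdot t).$$
Combined with \eqref{eq:tP1P2}, this gives \eqref{eq:O:RigideCritere:phi} at once.

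For \eqref{eq:O:RigideCritere:iota} I would run the dual argument. Equation \eqref{eq:pgcdFN} rewrites hypothesis \eqref{eq:tP4P5} as $\phi(\partial(P_4\cdot P_5\cdot t))=\phi(\partial(P_4\cdot P_5))$. Applying formula \eqref{E:PhiCompl} to each side converts this equality into $\tau^{-\sup(P_4 P_5 t)+1}\partial\iota(P_4\cdot P_5\cdot t)=\tau^{-\sup(P_4 P_5)+1}\partial\iota(P_4\cdot P_5)$. The key check is that the two suprema agree: since $t$ is positive, $P_4 P_5\preccurlyeq P_4 P_5 t$ gives $\sup(P_4 P_5)\leqslant\sup(P_4 P_5 t)$; and since $t\preccurlyeq\partial(P_4 P_5)$ we get $P_4 P_5 t\preccurlyeq P_4 P_5\cdot\partial(P_4 P_5)=\Delta^{\sup(P_4 P_5)}$, hence the reverse inequality. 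With the two $\tau$-exponents matching, both $\tau$ and the complement bijection $\partial$ on simple braids can be cancelled to yield $\iota(P_4\cdot P_5\cdot t)=\iota(P_4\cdot P_5)$, which is exactly \eqref{eq:O:RigideCritere:iota}.

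The only genuinely delicate point I anticipate is this supremum equality: without it, \eqref{E:PhiCompl} would introduce a spurious $\tau$-twist on one side and obstruct the identification of initial factors. Once both equations of Observation~\ref{O:RigideCritere} are in hand, that observation supplies the remaining content of the lemma, namely that $\tilde x$ is rigid and that the conjugation from $x$ to $\tilde x$ is non-intrusive.
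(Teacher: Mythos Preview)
Your proof is correct and follows essentially the same route as the paper's own argument: both derive \eqref{eq:O:RigideCritere:phi} directly from \eqref{eq:FNP4P5P1P2} and \eqref{eq:tP1P2}, and both obtain \eqref{eq:O:RigideCritere:iota} by rewriting \eqref{eq:tP4P5} via \eqref{eq:pgcdFN}, applying \eqref{E:PhiCompl} on each side, and matching the $\tau$-exponents using $\sup(P_4P_5t)=\sup(P_4P_5)$. Your explicit verification of that supremum equality (which the paper simply asserts) is a welcome addition.
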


\begin{proof}
Let us suppose that \eqref{eq:tP1P2} holds. Then so does~\eqref{eq:O:RigideCritere:phi}, because
$$\varphi(P_4\cdot P_5 \cdot P_1 \cdot P_2) \stackrel{\eqref{eq:FNP4P5P1P2}}{=} \varphi(t^{-1} \cdot P_1 \cdot P_2)\stackrel{\eqref{eq:tP1P2}}{=}\varphi(P_1 \cdot P_2)$$

Let us now prove the implication from~\eqref{eq:tP4P5} to \eqref{eq:O:RigideCritere:iota}. Assuming \eqref{eq:tP4P5}, we calculate
\begin{multline*}
\tau^{\sup(P_4\cdot P_5\cdot t)-1}(\partial \iota(P_4\cdot P_5\cdot t)) \stackrel{\eqref{E:PhiCompl}}{=} \phi(\partial(P_4\cdot P_5\cdot t)) = \phi(t^{-1}\cdot\partial(P_4\cdot P_5))\stackrel{\eqref{eq:tP4P5}}{=}\\
\stackrel{\eqref{eq:tP4P5}}{=} \phi(\partial(P_4\cdot P_5)) = \tau^{\sup(P_4\cdot P_5)-1}(\partial\iota(P_4\cdot P_5)).
\end{multline*}
Since $\sup(P_4\cdot P_5\cdot t)=\sup(P_4\cdot P_5)$, this implies $\iota(P_4\cdot P_5\cdot t) = \iota(P_4\cdot P_5)$, i.e.\ \eqref{eq:O:RigideCritere:iota}.
\end{proof}

Our aim now is to show that, in most cases, \eqref{eq:tP1P2} and \eqref{eq:tP4P5} are indeed satisfied.

\begin{definition}
A positive braid $\arobase$ is a \emph{blocking braid} if there exists an $i\in \{1,...,n-1\}$ so that for each braid~$X$ with $\inf(X)=0$ 
such that $X\cdot \arobase$ is in left normal form, the only non trivial simple braid which is a suffix of $X\cdot \arobase$ is $\sigma_i$. In other words, the last factor of the \emph{right} normal form of $X\cdot \arobase$ must be~$\sigma_i$.
\end{definition}

\begin{lemma}\label{L:XarobaseY}
Let $\arobase$ be a blocking braid and $X$ be a braid such that $\inf X = 0$ and such that $X\cdot\arobase$ is in normal form. Let $t$ be a prefix of $X\cdot\arobase$. If $(\sigma_i=)\phi(X\cdot \arobase) \neq \phi(t^{-1}\cdot X\cdot \arobase)$ then $t=X\cdot \arobase$.
\end{lemma}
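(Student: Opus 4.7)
The natural approach is a proof by contrapositive: assume $t\neq X\cdot\arobase$, set $Y:=t^{-1}\cdot X\cdot\arobase$, which is a non-trivial positive braid, and aim to show that $\phi(Y)=\phi(X\cdot\arobase)$. In the intended application the braid $t$ is itself positive (it arises as the gcd of two positive elements, as in the setting of Lemma~\ref{L:RigideCritereSym}), so $Y$ is a right divisor of $X\cdot\arobase$ in the positive monoid $\B_n^+$. Consequently, any simple right divisor of $Y$ is also a simple right divisor of $X\cdot\arobase$.

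The crux of the argument is the elementary observation that the last factor $\phi(Y)=y_r$ of the left normal form $Y=\Delta^{\inf Y}\cdot y_1\cdots y_r$ is itself a simple right divisor of~$Y$: indeed $Y\cdot y_r^{-1}=\Delta^{\inf Y}\cdot y_1\cdots y_{r-1}$ is positive. Combined with the preceding remark, $y_r$ is a simple right divisor of $X\cdot\arobase$, so the blocking property of $\arobase$ forces $y_r\in\{1,\sigma_i\}$. Since $y_r\neq 1$ by definition of the left normal form, we conclude $\phi(Y)=y_r=\sigma_i$. The same reasoning applied with $t=1$ (i.e.\ directly to $X\cdot\arobase$ in place of $Y$) yields $\phi(X\cdot\arobase)=\sigma_i$, so $\phi(Y)=\phi(X\cdot\arobase)$, contradicting the hypothesis of the lemma.

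A small preliminary point to clear up is that $\phi(Y)$ must be well-defined, i.e.\ that $Y$ has positive canonical length. Otherwise $Y=\Delta^p$ for some $p\geqslant 1$, and then $\Delta$ itself would be a non-trivial simple right divisor of $X\cdot\arobase$; the blocking property would force $\Delta=\sigma_i$, which is impossible for $n\geqslant 3$. I do not foresee any real obstacle beyond spotting the single key observation that the last factor of the left normal form is already a simple right divisor of the braid; once that is noticed, the argument is an immediate unwinding of the definitions of blocking braid and of left normal form.
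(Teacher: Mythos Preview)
Your argument is essentially the same as the paper's: set $s=t^{-1}\cdot X\cdot\arobase$, observe that $\phi(s)$ is a nontrivial simple suffix of $s$, hence of $X\cdot\arobase$, and invoke the blocking property to force $\phi(s)=\sigma_i=\phi(X\cdot\arobase)$. The paper's write-up is terser and does not separately treat the possibility $s=\Delta^p$ nor flag the positivity of~$t$; your extra remarks on those points are harmless refinements rather than a different route.
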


\begin{proof} 
Let $s=t^{-1}\cdot X\cdot \arobase$ be the braid such that $t\cdot s=X\cdot \arobase$ (of course, $t\cdot s$ is not in normal form as written). Let us suppose (to obtain a contradiction) that $s$ is nontrivial. Then $\phi(s)$ is a nontrivial simple braid which is a suffix of $s$ and so of $t\cdot s=X\cdot \arobase$. Yet, by hypothesis, the only nontrivial simple braid which is a suffix of $X\cdot \arobase$ is~$\sigma_i$. So $\phi(X \cdot \arobase) = \phi(s)$: contradiction.
\end{proof}

\begin{lemma}\label{L:ProprBloqu} Let $\arobase$ be a blocking braid and let $X,Y$ be braids such that $\inf X =\inf Y= 0$ and such that $X\cdot\arobase\cdot Y$ is in normal form. Let $t$ be a prefix of $X\cdot\arobase\cdot Y$.  If $\phi(t^{-1}\cdot X\cdot \arobase\cdot Y)\neq \phi(X\cdot \arobase\cdot Y)$, 
then the normal form of $t$ contains the normal form of $X\cdot \arobase$ as a prefix.
\end{lemma}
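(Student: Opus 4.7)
The plan is to generalise Lemma~\ref{L:XarobaseY} by pinning down precisely how the left normal form of a positive prefix of $W := X\cdot\arobase\cdot Y$ can look. First I would write $W$ in its left normal form as $u_1\cdots u_m$, where the first $N := \lc(X\arobase)$ factors spell out the normal form of $X\arobase$ and the remaining $m - N$ factors spell out the normal form of $Y$; in particular $\phi(W) = u_m$.

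The key structural fact I will invoke is the standard Garside-theoretic statement that any positive prefix $t \preccurlyeq W$ has a left normal form of the shape $u_1 \cdots u_{k-1} \cdot v$ for some index $1 \leqslant k \leqslant m+1$ and some simple $v$ with $v \preccurlyeq u_k$ (with the usual absorptions when $v = 1$ or $v = u_k$). This is proved by induction on $\lc(t)$, using that $u_1$ is the greatest simple prefix of any braid in normal form; the heart of the induction is the observation that if the first factor of the normal form of $t$ were a strict prefix of $u_1$, one verifies directly that $t$ would already have to be simple.

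With this description in hand I would compute $\phi(t^{-1}\cdot W)$ case by case. If $t = u_1 \cdots u_j$ is a ``clean'' prefix (that is, $v = 1$), then $t^{-1}W = u_{j+1}\cdots u_m$ is already in normal form, so $\phi(t^{-1}W) = u_m = \phi(W)$. If $t = u_1 \cdots u_{k-1}\cdot v$ with $1 \prec v \prec u_k$ strictly but $k \leqslant m-1$, then $u_m$ is still a suffix of $t^{-1}W = (v^{-1}u_k)\cdot u_{k+1}\cdots u_m$, and since any simple suffix of $t^{-1}W$ is a simple suffix of $W$ and hence a right divisor of $\phi(W) = u_m$, one again concludes $\phi(t^{-1}W) = u_m$. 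The only possibility consistent with the hypothesis $\phi(t^{-1}W) \neq \phi(W)$ is therefore $k = m$ with $1 \prec v \prec u_m$, and there $t^{-1}W = v^{-1}u_m$ is a simple braid distinct from $u_m$.

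In that remaining case, the normal form of $t$ reads $u_1\cdots u_{m-1}\cdot v$. Whenever $\lc(Y) \geqslant 1$, we have $N \leqslant m-1$, so the normal form $u_1\cdots u_N$ of $X\arobase$ occurs as a prefix of the normal form of $t$, which is the desired conclusion. The degenerate case $\lc(Y) = 0$ (where $W = X\arobase$) reduces directly to Lemma~\ref{L:XarobaseY}, which forces $t = X\arobase$; moreover in that case $u_m = \phi(X\arobase) = \sigma_i$ by the blocking property, so $\sigma_i$ has no nontrivial strict prefix and the case just discussed is automatically vacuous. The principal obstacle is justifying the structural description of prefixes invoked in the second paragraph; this appears standard in Garside theory but must be verified by a careful short induction on $\lc(t)$.
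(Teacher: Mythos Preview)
Your ``key structural fact''---that every positive prefix $t\preccurlyeq W=u_1\cdots u_m$ has normal form $u_1\cdots u_{k-1}\cdot v$ with $v\preccurlyeq u_k$---is false in general. A small counterexample in $\B_4$: take $W=\sigma_1\sigma_3\cdot\sigma_1\sigma_3$ (in left normal form, since $F(\sigma_1\sigma_3)=\{1,3\}=S(\sigma_1\sigma_3)$), and $t=\sigma_1\cdot\sigma_1$. Then $t\preccurlyeq W$ because $t^{-1}W=\sigma_3\sigma_3$ is positive, yet the normal form of $t$ is $\sigma_1\cdot\sigma_1$, whose first factor $\sigma_1$ is a \emph{strict} prefix of $u_1=\sigma_1\sigma_3$ even though $\lc(t)=2$. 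So your inductive claim ``if the first factor of $t$ were a strict prefix of $u_1$ then $t$ would already be simple'' breaks down, and with it the entire case analysis.

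There is also a warning sign you might have noticed: your argument never uses the blocking hypothesis on~$\arobase$ except in the degenerate case $\lc(Y)=0$. If your structural fact were true it would prove the lemma for \emph{any} $W$ in normal form, which is much too strong. The paper's proof is quite different and genuinely exploits blocking: it sets $t_1=t\wedge(X\arobase)$, argues (by contradiction, using a gcd computation) that $\phi(t_1^{-1}X\arobase)\neq\phi(X\arobase)$, and then invokes Lemma~\ref{L:XarobaseY} to force $t_1=X\arobase$, i.e.\ $X\arobase\preccurlyeq t$. A separate short argument using $S((X\arobase)^{-1}t)\subset S(Y)\subset F(\arobase)$ upgrades this to a prefix of the \emph{normal form} of~$t$. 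You should rebuild the proof along these lines rather than trying to describe all prefixes of $W$ at once.
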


\begin{proof} 
Let $t_1 = \pgcd{t}{(X \cdot \arobase)}$. We claim, and will prove below, that $\phi(t_1^{-1} \cdot X \cdot \arobase) \neq \phi( X \cdot \arobase)$. By applying Lemma~\ref{L:XarobaseY} to $t_1$, we deduce that $X \cdot \arobase$ is a prefix of $t$. 
It remains to show that the normal form of $X \cdot \arobase$ is even the beginning of the normal form of $t$: indeed, $t$ is a prefix of $X\cdot\arobase\cdot Y$ and so $(X \cdot \arobase)^{-1} t$ is a prefix of $Y$. In particular, $\init{(X \cdot \arobase)^{-1} t} \subset \init Y \subset \final{\arobase}$, the last inclusion coming from the fact that $X\cdot\arobase\cdot Y$ is in normal form. So $X \cdot \arobase \cdot \fn((X \cdot \arobase)^{-1} t)$ is in normal form, which implies what we wanted.

Here is now the proof of our claim. We suppose, for a contradiction, that $\phi(t_1^{-1} \cdot X \cdot \arobase) = \phi( X \cdot \arobase)$. This means that $\fn(t_1^{-1} \cdot X \cdot \arobase) \cdot Y$ is in normal form. Since $\pgcd{t_1^{-1} t}{t_1^{-1} X \cdot \arobase} = 1$, we can deduce that we also have $\pgcd{t_1^{-1} t}{t_1^{-1} X \cdot \arobase \cdot Y} = 1$. Then, as $t_1^{-1} t$ left-divides $t_1^{-1} X \cdot \arobase \cdot Y$, we have $t_1^{-1} t = 1$. Finally, this implies that $t$ is a prefix of $X \cdot \arobase$, and so, by Lemma~\ref{L:XarobaseY}, $\phi(t^{-1}\cdot X\cdot \arobase\cdot Y) = \phi(X\cdot \arobase\cdot Y)$, contradicting the hypothesis.
\end{proof}

\begin{lemma}\label{L:BloquExiste}
Blocking braids exist.
\end{lemma}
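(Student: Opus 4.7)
The plan is to exhibit an explicit blocking braid. Fix $i=1$ for definiteness. I seek a positive~$\alpha$ such that for every~$X$ with $\inf(X)=0$ making $X\cdot\alpha$ in left normal form, the maximal simple suffix of $X\cdot\alpha$ is exactly~$\sigma_1$. A naive first attempt such as $\alpha=\sigma_1$ already fails for $n\geq 4$: taking $X$ to be the half twist on strands $1,\ldots,n-1$ (a simple braid of~$B_n$, with finishing set containing $1$), the identity $X\cdot\sigma_1=\sigma_{n-2}\cdot X$ forces $X$ itself to right-divide~$X\cdot\sigma_1$, producing a simple suffix strictly larger than~$\sigma_1$. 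So $\alpha$ must be longer and carry enough internal structure to neutralize such contributions.

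My candidate is a braid of the form
$$
\alpha = u_1\cdot u_2\cdots u_N\cdot\sigma_1,
$$
in left normal form, where each $u_j$ is a simple with $F(u_j)=\{1\}$ (so that the pairs $(u_j,u_{j+1})$ and $(u_N,\sigma_1)$ are automatically left-weighted, since the starting set of the next factor is contained in $\{1\}=F(u_j)$), and $N$ is taken sufficiently large. The intuition is that, with this structure, any attempt to build a simple right-divisor $s\neq\sigma_1$ of $X\cdot\alpha$ is forced to propagate leftwards through the successive~$u_j$, and we arrange that this propagation exhausts itself before reaching into~$X$.

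To verify the blocking property, suppose $s$ is a non-trivial simple right-divisor of $X\cdot\alpha$ distinct from~$\sigma_1$. Then $\sigma_1\preccurlyeq' s$, so $s=s'\sigma_1$ with $s'$ a non-trivial left-divisor of~$\partial\sigma_1$, and $s\preccurlyeq'X\cdot\alpha$ translates into $s'\preccurlyeq'X\cdot u_1\cdots u_N$. Iterating this reasoning along the $u_j$ produces a cascade of shrinking right-divisibility constraints whose ``state'' at each step is a specific simple braid. Since the set of simple braids of~$B_n$ is finite, the possible cascade states live in a finite transition graph; a pigeonhole argument (or an explicit description of the transitions) shows that one can choose the~$u_j$ so that every cascade collapses within~$\alpha$ once $N$ is larger than a constant depending only on~$n$. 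The main obstacle is producing this choice rigorously. A concrete option is $u_j=u$ for all~$j$, where $u$ is a fixed simple with $S(u)=F(u)=\{1\}$ (for which $(u,u)$ is automatically left-weighted, so $u^N$ is in left normal form); one then verifies directly that $\alpha=u^N\cdot\sigma_1$ is a blocking braid for~$N$ larger than the diameter of the cascade graph.
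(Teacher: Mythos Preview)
Your concrete candidate does not work. First, the only simple braid~$u$ with $\init u=\final u=\{1\}$ is $u=\sigma_1$: indeed, $\init u=\{1\}$ forces the underlying permutation to be $\pi=(k,1,2,\ldots,\widehat{k},\ldots,n)$ for some $k\geqslant 2$ (i.e.\ $u=\sigma_{k-1}\sigma_{k-2}\cdots\sigma_1$), and then $\final u=\{k-1\}$, so $k=2$. Hence your braid is forced to be $\arobase=\sigma_1^{N+1}$. For $n\geqslant 4$ this is never blocking, regardless of~$N$. Take $X=\Delta_{1,n-1}$, the positive half-twist on the first $n-1$ strands; this is a single simple factor with $\inf(X)=0$ and $\final X=\{1,\ldots,n-2\}\supseteq\{1\}=\init\arobase$, so $X\cdot\arobase$ is in left normal form. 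From $\Delta_{1,n-1}\,\sigma_1=\sigma_{n-2}\,\Delta_{1,n-1}$ one gets
\[
X\cdot\arobase \;=\; \Delta_{1,n-1}\,\sigma_1^{\,N+1}\;=\;\sigma_{n-2}^{\,N+1}\,\Delta_{1,n-1},
\]
so $\Delta_{1,n-1}$ is a non-trivial simple suffix of $X\cdot\arobase$ different from~$\sigma_1$. In particular the last factor of the right normal form of $X\cdot\arobase$ is not~$\sigma_1$. This also shows that your ``cascade'' need not shrink at all: here the state stabilises at $\Delta_{1,n-1}$ forever. (Your more general template with varying $u_j$ and $\final{u_j}=\{1\}$ reduces to essentially the same thing: the left-weighting condition $\init{u_{j+1}}\subset\final{u_j}=\{1\}$ forces $u_j=\sigma_1$ for all $j\geqslant 2$.)

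The paper avoids this by constructing an explicit $\arobase$ (with $i=2$) that is simultaneously in left \emph{and} right normal form, whose last factor is~$\sigma_2$, and whose \emph{first} factor has starting set $\{1,\ldots,n-2\}$ of maximal possible size~$n-2$. This maximality is the crucial ingredient you are missing: it forces $\final X=\{1,\ldots,n-2\}=\init{\arobase}$ for \emph{every} admissible~$X$, so the junction between $X$ and $\arobase$ is automatically right-weighted, and the right normal form of $X\cdot\arobase$ ends in~$\sigma_2$ with no cascade analysis required. Achieving both this large starting set and a right-normal-form word ending in a single generator cannot be done with factors of the restricted shape $\final{u_j}=\{1\}$; it needs the graded construction the paper writes down.
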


\begin{proof}
Here is such a construction: denoting by $\Delta_{i,j}$ the positive half-twist involving the strands $i,i+1,\ldots ,j$, let
\begin{multline*}
\arobase = \Delta_{1,n-1} \sigma_{n-1} \cdot \Delta_{1,n-2} \sigma_{n-1} \sigma_{n-2} \cdot \Delta_{1,n-3} \sigma_{n-2} \sigma_{n-3} \cdot \Delta_{1,n-4} \sigma_{n-3} \sigma_{n-4} \cdots \\
\sigma_1\sigma_2\sigma_1 \sigma_4 \sigma_3 \cdot \sigma_1 \sigma_3 \sigma_2 \cdot \sigma_2. 
\end{multline*}

\begin{figure}[htb]
\begin{center} \includegraphics[height=9cm]{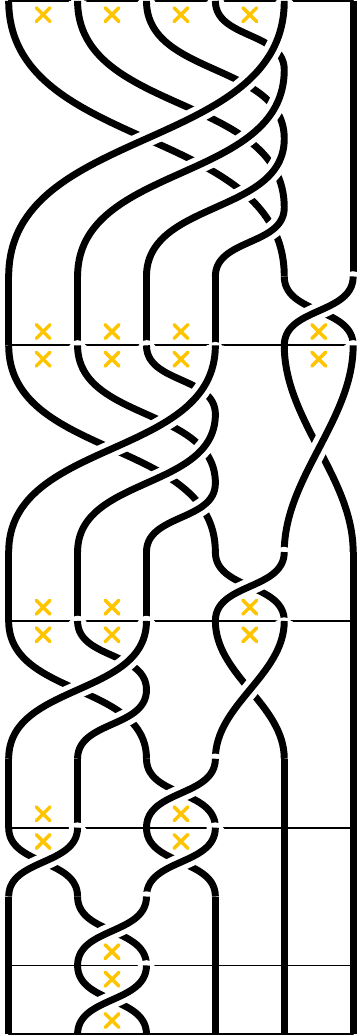}\end{center}
\caption{Example of a blocking braid with $6$ strands. The yellow crosses indicate the starting and finishing sets.}\label{F:blocking}
\end{figure}

For example with $6$ strands (see Figure~\ref{F:blocking}):
$$
\arobase = \sigma_1 \sigma_2 \sigma_3 \sigma_4 \sigma_1 \sigma_2 \sigma_3 \sigma_1 \sigma_2 \sigma_1 \sigma_5 \cdot \sigma_1 \sigma_2 \sigma_3 \sigma_1 \sigma_2 \sigma_1 \sigma_5 \sigma_4 \cdot \sigma_1 \sigma_2 \sigma_1 \sigma_4 \sigma_3 \cdot \sigma_1 \sigma_3 \sigma_2 \cdot \sigma_2.
$$

It is a braid word which is in left normal form, but also in right normal form. We observe that the starting set of~$\arobase$ is 
$\init \arobase=\{1,...,n-2\}$ and its finishing set is
$\final \arobase=\{2\}$. If $X\cdot \arobase$ is in (left) normal form, then $\final X\supseteq \{1,...,n-2\}$ and so $\final X=\{1,...,n-2\}$. This implies that $X\cdot\arobase$ is also in right normal form. So the only simple factor which can be extracted by the right from $X\cdot \arobase$ is $\sigma_2$, as we wanted.  
\end{proof}

In order to prove that blocking braids are almost certain to occur just where we need them, we will use the following lemma, which results from Lemma 3.5 and Remark 3.6 in \cite{CarusoPA}.

\begin{lem}\label{lem:sousmot}
Let $a_1$, $a_2$, $a_3\co \N\to\N$ be functions with $a_1+a_3$ and $a_2$ non-decreasing and tending to infinity, 
and such that $a_1(l) + a_2(l) + a_3(l) = l$.
For each braid $x$ of infimum $\epsilon \in \Z$ and of canonical length $l$, of normal form $\Delta^\epsilon x_1 \cdots x_l$, denote by $P(x) = x_{a_1(l)+1} \cdots x_{a_1(l)+a_2(l)}$ (so $P(x)$ is a part of the normal form of $x$ of length $a_2(l)$).

Let $w$ be a fixed braid. Then the proportion of braids $x \in \B^{\epsilon, l}_n$ such that the normal form of $P(x)$ contains that of $w$ as a subword tends exponentially quickly to $1$ when $l$ tends to infinity.
\end{lem}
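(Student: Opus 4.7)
The plan is to deduce this from the quantitative estimates already established in \cite{CarusoPA}. Lemma~3.5 and Remark~3.6 of that paper provide, for any fixed word $w$ and any sufficiently interior position $i$ in the normal form, a positive lower bound $c=c(n,w)>0$ on the proportion of braids $x\in\B_n^{\epsilon,l}$ whose normal form has $w$ occurring precisely at positions $i+1,\ldots,i+\lc(w)$, uniformly in $l$ and in $i$ (as long as $i$ and $l-i-\lc(w)$ are at least a fixed constant depending only on $n$). The present lemma only requires that $w$ appear \emph{somewhere} inside the window $P(x)$, so the strategy is to tile the window and apply the one-position bound repeatedly.

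I would proceed in two steps. First, partition the window $[a_1(l)+1,\, a_1(l)+a_2(l)]$ into $N(l)=\lfloor a_2(l)/\lc(w)\rfloor$ consecutive non-overlapping sub-windows of length $\lc(w)$. Since $a_2$ is non-decreasing and unbounded, $N(l)\to\infty$. The hypothesis that $a_1+a_3$ is non-decreasing and tends to infinity guarantees that, for $l$ large enough, every sub-window sits at an interior position of the normal form, so that Lemma~3.5 of \cite{CarusoPA} applies to each of them with the \emph{same} constant~$c$. Second, use the Markovian nature of left normal forms: conditionally on the factors strictly preceding the $k$-th sub-window, the probability that the $k$-th sub-window equals $w$ is still bounded below by some $c'\in(0,c]$, independent of $k$. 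Telescoping this conditional bound over $k=1,\ldots,N(l)$ yields
$$
\#\bigl\{x\in\B_n^{\epsilon,l} : w \text{ is not a subword of } P(x)\bigr\} \ \leqslant\ (1-c')^{N(l)}\cdot \#\B_n^{\epsilon,l},
$$
which tends to zero exponentially fast in $a_2(l)$, and therefore in $l$ since $a_2$ is unbounded.

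The main obstacle is the uniform lower bound on the conditional appearance probability $c'$. This is essentially a transfer-matrix question: counting left-weighted sequences with prescribed first and last factors is governed by a primitive non-negative matrix, whose Perron--Frobenius spectral gap provides the required uniformity. Since this uniformity is exactly the content of Remark~3.6 of \cite{CarusoPA}, which handles the subtleties coming from the left-weighting constraint at the two boundaries of each sub-window, the task of the present proof reduces to checking the hypotheses of that remark (namely, that each tile has sufficient left and right context, guaranteed by $a_1+a_3\to\infty$) and carrying out the elementary telescoping estimate above.
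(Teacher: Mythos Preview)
The paper itself gives no proof of this lemma; it simply records that the statement ``results from Lemma~3.5 and Remark~3.6 in~\cite{CarusoPA}''. So your task was to reconstruct the argument, and your overall strategy (pass to the transfer matrix, exploit the Markovian structure of left-weighted words, tile the window and control the probability of avoidance) is the right circle of ideas. One small point: your estimate is exponential in $a_2(l)$, not in $l$, which is what one actually obtains; the paper only ever applies the lemma with $a_2(l)$ proportional to $l$, so this is harmless.

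There is, however, a genuine gap in the conditional step. You tile $P(x)$ into blocks of length exactly $\lc(w)$ and assert that, conditionally on the factors preceding the $k$-th block, the probability that this block \emph{equals}~$w$ is bounded below by a positive constant~$c'$ independent of~$k$. This is false as stated: if the last factor $s$ preceding the block satisfies $\init(w)\not\subset\final(s)$, then the pair $(s,\iota(w))$ is not left-weighted and the conditional probability is zero. Primitivity of the transfer matrix does not repair this, because primitivity concerns powers of the matrix, not individual transitions; it guarantees that \emph{some} bounded-length path connects any state to $\iota(w)$, not that the one-step (or $\lc(w)$-step) transition is positive.

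Two standard fixes are available. Either enlarge each tile to length $\lc(w)+d$, where $d$ is the diameter of the left-weighting automaton, and ask only that $w$ occur \emph{somewhere} in the tile; then whatever the preceding factor, there is a left-weighted path of length at most~$d$ to $\iota(w)$, and your telescoping bound goes through with a genuine $c'>0$. Or bypass tiling altogether and argue via growth rates: left-weighted words avoiding the subword~$w$ form a regular language with growth rate $\mu<\lambda$, so the number of $x\in\B_n^{\epsilon,l}$ with $P(x)$ avoiding $w$ is of the order of $\lambda^{a_1(l)+a_3(l)}\mu^{a_2(l)}$, giving a proportion $O((\mu/\lambda)^{a_2(l)})$. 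This second route is what the paper's later uses of the lemma (e.g.\ the explicit $\lambda^{l-l/10}\mu^{l/10}$ count in the proof of Lemma~\ref{L:PrefixRare}) suggest \cite{CarusoPA} actually does.
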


\begin{proof}[Proof of Proposition~\ref{P:ConjNonintr}]
We recall that we have to prove that the proportion, among the braids $x$ in~$\B_n^{\epsilon,l}$, of braids for which one of the two hypotheses,
\eqref{eq:tP1P2} or \eqref{eq:tP4P5}, of Lemma~\ref{L:RigideCritereSym} is \emph{not} satisfied, tends exponentially quickly to~0 as $l$ tends to infinity. 
In fact, we shall only prove that braids not satisfying hypothesis~\eqref{eq:tP1P2} are rare. Since the operation of taking the complement $\partial\co \B_n^{0,2\lceil \frac l 5 \rceil}\to \B_n^{0,2\lceil \frac l 5 \rceil}$ is a bijection, we have a completely analogue situation for hypothesis~\eqref{eq:tP4P5}. 

By Lemma~\ref{lem:sousmot}, the proportion of braids~$x$ such that $P_2(x)$ contains a blocking braid tends to~1 exponentially quickly. 
Among these braids, look at those for which 
$$\phi(P_4\cdot P_5\cdot P_1\cdot P_2)\neq \phi(P_1\cdot P_2)$$
holds, or in other words
$$\phi(t^{-1}\cdot P_1\cdot P_2)\neq \phi(P_1\cdot P_2) \text{, \ \ where \ \ }t=\pgcd{(P_1\cdot P_2)}{\partial(P_4\cdot P_5)}
$$
For those braids, by Lemma~\ref{L:ProprBloqu}, the normal form of $t$ must contain that of $P_1$ as a prefix,
and in particular $P_1\preccurlyeq t$. (Intuitively, the factor $P_1$ must be completely ``eaten''\ during the transformation of $P_4\cdot P_5\cdot P_1\cdot P_2$ into normal form, possibly creating some new factors $\Delta$.) Thus
$$P_1 = \pgcd{P_1}{\Delta^{\lceil\frac{l}{5}\rceil}} \preccurlyeq \pgcd{t}{\Delta^{\lceil\frac{l}{5}\rceil}} \preccurlyeq \pgcd{(\partial(P_4\cdot P_5))}{\Delta^{\lceil\frac{l}{5}\rceil}} = \partial P_5
$$
So $P_1(x)$ must be a prefix of $\partial P_5(x)$. Yet, the proportion of braids $x$ for which this is the case is negligible:

\begin{lem}\label{L:PrefixRare}
The proportion, among all elements of $\B_n^{\varepsilon,l}$, of braids $x \in \B_n^{\varepsilon,l}$ such that $P_1$ is a prefix of $\partial P_5$  decreases exponentially quickly to~$0$ when $l$~tends to infinity.
\end{lem}

\begin{proof}
We decompose $\partial P_5$ in two parts of length $\lceil \frac l {10} \rceil$ 
and $\lceil \frac l {5} \rceil-\lceil \frac l {10} \rceil$: $\partial P_5 = Q_1 \cdot Q_2$. As before, according to Lemma \ref{lem:sousmot}, the proportion of braids $x \in \B_n^{\varepsilon,l}$ such that $Q_2$ contains a blocking braid tends exponentially quickly to $1$ (more precisely, according to~\cite{CarusoPA}, the number of braids for which this is not the case is of the order of $\g^{l-\frac l{10}}\m^{\frac l{10}}$ for two constants $1 < \m < \g$, while the cardinality of $\B_n^{\varepsilon,l}$ is of the order of $\g^l$).

We now show the following: if $x$ satisfies the condition of the lemma that $P_1$ is a prefix of $\partial P_5$, and if $Q_2$ contains a blocking braid, then the normal form of $P_1$ contains that of $Q_1$ as a prefix. For that, it suffices to prove that $\phi(P_1^{-1} \partial P_5) \neq \phi(\partial P_5)$ and to apply Lemma \ref{L:ProprBloqu}.

Let us recall that $P_1$ and $\partial P_5$ have the same length $\lceil \frac l 5 \rceil$. To simplify the notations, let us denote by $k = \lceil \frac l 5 \rceil$, and by $P_1 = y_1 \cdots y_k$ and $\partial P_5 = z_1 \cdots z_k$ the normal forms. The condition of the lemma is that $y_1 \cdots y_k$ is a prefix of $z_1 \cdots z_k$. Let us suppose for a contradiction that $\phi(P_1^{-1} \partial P_5) = \phi(\partial P_5)$, \emph{i.e.} that $\phi(y_l^{-1} \cdots y_1^{-1} z_1 \cdots z_k) = z_k$. This means that
$$\fn(y_k^{-1} \cdots y_1^{-1} z_1 \cdots z_k) = \fn(y_k^{-1} \cdots y_1^{-1} z_1 \cdots z_{k-1}) \cdot z_k$$ 
and in particular that $y_k^{-1} \cdots y_1^{-1} z_1 \cdots z_{k-1}$ is a positive braid, \emph{i.e.} that $y_1 \cdots y_k$ is a prefix of $z_1 \cdots z_{k-1}$. This is impossible, as $y_1 \cdots y_k$ is a longer braid than $z_1 \cdots z_{k-1}$.

We deduce that, if $Q_2$ contains a blocking braid, under the condition that $P_1$ is a prefix of $\partial P_5$, then the normal form of $P_1$ contains that of $Q_1$ as a prefix. A braid $x$ satisfying these conditions is thus determined by at most $l-\lceil \frac l{10} \rceil$ factors, since the $\lceil \frac l{10} \rceil$ factors of $Q_1$ are determined by the first factors of $P_1$. So the proportion of such braids is, still according to \cite{CarusoPA}, of the order of $\g^{-\frac l {10}}$.

Finally, among all braids of $\B_n^{\varepsilon,l}$, the proportion of elements such that $P_1$ is a prefix of $\partial P_5$ decreases exponentially quickly to $0$ with $l$. This completes the proof of Lemma~\ref{L:PrefixRare}.
\end{proof}

\begin{proof}[Alternative proof of Lemma~\ref{L:PrefixRare}]
We recall that $P_1$ and $\partial P_5$ have the same length $\lceil \frac l 5 \rceil$. To simplify the notations, we denote by $k = \lceil \frac l 5 \rceil$, and by $P_1 = y_1 \cdots y_k$ and $\partial P_5 = z_1 \cdots z_k$ the normal forms. Assume that $y_1 \cdots y_k$ is a prefix of $z_1 \cdots z_k$ -- our aim is to show that this decreases substantially the number of possible words $y_1\cdots y_k$. 

For $i=1,\ldots,k-1$, the braid $y_1 \cdots y_i$ should be a prefix of $z_1 \cdots z_i$. Denote by $\delta_i$ the positive braid $y_i\inv \cdots y_1\inv z_1 \cdots z_i$. As $y_{i+1}y_{i+2}\delta_{i+2}=\delta_i z_{i+1}z_{i+2}$, it follows that $y_{i+1}y_{i+2}$ is a divisor of $\delta_i z_{i+1}z_{i+2}$; moreover, this last braid does not contain any $\Delta$-factor (because if it did, then so would $\partial P_5=y_1\cdots y_i\delta_i z_{i+1}\cdots z_k$.) This enforces a strong restriction on the possible factors $y_{i+1}\cdot y_{i+2}$, beyond the obvious requirement that $y_i\cdot y_{i+1}\cdot y_{i+2}$ should be in normal form. 

We will now use the fact (which we leave to the reader as an amusing exercise) that in every positive braid whose normal form contains exactly two factors, both 
different from~$\Delta$, there is a pair of strands that do not cross. In each divisor of such a braid the corresponding strands do not cross either. Let us apply this fact to the first two factors of $\delta_i z_{i+1}z_{i+2}$: there exists a pair of strands, the $r$th and the $s$th, that do not cross, and hence do not cross in $y_{i+1}y_{i+2}$, either. Let $t$ be an element of $F(y_i)$. We can then construct a braid in normal form $y_{i+1}\cdot y_{i+2}$ such that $I(y_{i+1})=\{t\}$ and where the $r$th and $s$th strands cross. 
(This is an easy exercise - for example, in $\B_6$ if $t=1$, $r=4$ and $s=6$, we can choose $y_{i+1}\cdot y_{i+2}=\sigma_1\sigma_2\sigma_3\cdot\sigma_3\sigma_4\sigma_5$; if $t=3$, $r=1$, $s=6$, we choose $y_{i+1}\cdot y_{i+2}$ so that $y_{i+1}=\sigma_3\sigma_2\sigma_1\sigma_4\sigma_3\sigma_2\sigma_5\sigma_4\sigma_3$). This choice for $y_{i+1}$ and $y_{i+2}$ is therefore forbidden by the hypothesis that $P_1$ is a prefix of $\partial P_5$, 
even though $y_1\cdots y_i\cdot y_{i+1}\cdot y_{i+2}$ is in normal form.

Since there is such a restriction for every value of~$i$ between $1$~and $k-2$ (and this for every possible braid $\partial P_5$), the set of braids for which $P_1$ is a divisor of $\partial P_5$ has a lower rate of exponential growth than that of all braids. This implies Lemma~\ref{L:PrefixRare}.
\end{proof}

The proof of Proposition~\ref{P:ConjNonintr} is now complete. Let us summarize again: among the braids~$x\in\B_n^{\epsilon,l}$, ``generic'' ones (a proportion which tends exponentially quickly to~1 as $l$~tends to~$\infty$) contain a blocking braid in their $P_2(x)$-segment (and, symmetrically, in $\partial P_4(x)$). For such a braid containing a blocking braid in $P_2$ and in $\partial P_4$, the only way to avoid being non-intrusively conjugate to a rigid braid is that, in the process of transforming $P_4 P_5\cdot P_1 P_2$ into normal form 
\begin{itemize}
\item either $P_1$ is completely absorbed into $P_5$, 
possibly creating some new factors $\Delta$
\item or, symmetrically, $P_5$ is completely absorbed into $P_1$. 
\end{itemize}
As seen in Lemma~\ref{L:PrefixRare}, generically this does not happen (it only happens to a proportion of braids which tends exponentially quickly to 0).
\end{proof}

\section{Pseudo-Anosov braids are generic}\label{genericite}

\begin{theorem}\label{T:main}
Consider the ball $\boule l$ of radius~$l$ and center $1$ in the Cayley graph of the braid group~$\B_n$, with generators the simple braids. Then the proportion of pseudo-Anosov braids among the elements of this ball tends to~1 as $l$ tends to infinity. Moreover, this convergence happens exponentially fast. 
\end{theorem}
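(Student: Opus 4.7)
The plan is to derive Theorem~\ref{T:main} from Proposition~\ref{P:ConjNonintr} together with the genericity of pseudo-Anosov braids among rigid ones established in~\cite{CarusoPA}, after a reduction from the ball $\boule l$ to the strata $\B_n^{\epsilon,r}$ on which Proposition~\ref{P:ConjNonintr} applies, and a verification that the rigid conjugate produced is itself pseudo-Anosov with exponentially high probability.

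\textbf{Concentration of the canonical length.} First I would show that for every fixed $c\in(0,1)$, the proportion of $x\in\boule l$ with $\lc(x)<cl$ decays exponentially in~$l$. By the counting estimates in~\cite{CarusoPA}, $|\B_n^{\epsilon,r}|$ grows like~$\g^r$ for some $\g>1$ (independent of $\epsilon$ modulo parity), while $|\boule l|$ is of order $\g^l$. Using the crude word-length bound $\ell(x)\leqslant|\epsilon|+r$ for elements of $\B_n^{\epsilon,r}$, only polynomially many pairs $(\epsilon,r)$ with $r\leqslant cl$ contribute to~$\boule l$, and their total contribution is $O(l^2\,\g^{cl})$, exponentially negligible compared with~$\g^l$.

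\textbf{Non-intrusive conjugation to a rigid braid.} On the event $\lc(x)\geqslant cl$, Proposition~\ref{P:ConjNonintr} implies that the proportion of $x$ failing to admit a non-intrusive conjugation to some rigid braid $\tilde x$ is at most $\muR^{\lc(x)}\leqslant\muR^{cl}$. Summing over the polynomially many admissible strata $(\epsilon,r)$ and combining with the previous step, a proportion $1-O(\muR^{cl})$ of $x\in\boule l$ is non-intrusively conjugate to some rigid braid~$\tilde x$, and by Definition~\ref{defi:nonintrusive} the normal form of $\tilde x$ contains the middle fifth $P_3(x)$ as a subword; this subword has length at least $cl/5-O(1)$.

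\textbf{The rigid conjugate is pseudo-Anosov.} The key lemma of~\cite{CarusoPA} yields a fixed word $w\in\B_n$ (the ``pseudo-Anosov-certifying'' subword, built from the reducing-curve obstruction used there) such that any rigid braid whose normal form contains $w$ as a subword is pseudo-Anosov. Applying Lemma~\ref{lem:sousmot} with $P(x):=P_3(x)$ shows that, on a further exponentially-generic subset of the previous event, $P_3(x)$ contains $w$ as a subword, and hence so does the normal form of~$\tilde x$. Thus $\tilde x$ is a rigid pseudo-Anosov braid; since the Nielsen--Thurston type is a conjugacy invariant, $x$ itself is pseudo-Anosov. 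This simultaneously establishes Theorem~\ref{T:main} and the stronger statement announced just after it.

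The main obstacle is the third step: one must re-read the ``rigid $\Rightarrow$ generically pseudo-Anosov'' result of~\cite{CarusoPA} as a purely local criterion, robust under arbitrary normal-form data flanking the certifying subword~$w$. The first step is routine counting, and the crude word-length bound $\ell(x)\leqslant|\epsilon|+r$ is enough; a delicate analysis of the word length for braids of negative infimum is unnecessary. The middle step is an immediate stratification-and-summation argument once Proposition~\ref{P:ConjNonintr} is in hand.
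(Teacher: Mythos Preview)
Your strategy is the paper's own: combine Proposition~\ref{P:ConjNonintr} with the ``local'' pseudo-Anosov criterion from \cite{CarusoPA} (applied to the middle fifth~$P_3$), then pass from the strata $\B_n^{\epsilon,r}$ to the ball $\boule l$ by a counting argument. The paper packages your Steps~2 and~3 together as Lemma~\ref{L:pAparmiBnik} and then sums directly over all strata, rather than first isolating a ``concentration'' event $\lc(x)\geqslant cl$; but this is only a cosmetic reorganisation. (One tiny point: in \cite{CarusoPA} there are \emph{two} certifying subwords, not one, but this changes nothing.)

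There is, however, one genuine slip in your Step~1. The inequality you invoke, $\ell(x)\leqslant |\epsilon|+r$, points the wrong way: to bound the number of strata $\B_n^{\epsilon,r}$ that meet $\boule l$ you need a \emph{lower} bound on $\ell(x)$ in terms of $\epsilon$ and~$r$, not an upper bound. What is actually required is the fact, proved in~\cite{CM} and used explicitly in the paper, that mixed normal forms are geodesic, giving the exact decomposition
\[
\boule{l}\;=\;\bigcup_{k=0}^{l}\ \bigcup_{i=-l}^{\,l-k}\ \B_n^{i,k}.
\]
With this in hand your counting goes through (and in fact your separate concentration step becomes unnecessary: one can simply sum $\sum_{k}(2l-k+1)\cdot O((\g\mupA)^k)$ and divide by $|\boule l|\geqslant\Theta(\g^l)$, as the paper does). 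Without it, your claim that ``only polynomially many pairs $(\epsilon,r)$ contribute'' is unjustified.
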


Several key points of the proof come directly from~\cite{CarusoPA}.

\begin{lemma}\label{L:pAparmiBnik}
There exists a constant~$\mupA$ (which depends on~$n$) such that, among the braids in $\B_n^{\epsilon,l}$, the proportion of those that can be non-intrusively conjugated to a rigid pseudo-Anosov braid is at least $1-\mupA^l$ (for sufficiently large~$l$, independently of~$i$).
\end{lemma}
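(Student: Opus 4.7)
The plan is to combine Proposition~\ref{P:ConjNonintr} of this paper with the main technical result of~\cite{CarusoPA}, namely that among \emph{rigid} braids of canonical length~$\ell$, the proportion of pseudo-Anosov braids tends exponentially fast to~$1$ as $\ell\to\infty$. By Proposition~\ref{P:ConjNonintr}, all but a fraction at most $\muR^l$ of elements $x\in\B_n^{\epsilon,l}$ admit a non-intrusive conjugation to some rigid braid; call such $x$ \emph{generic}, and absorb the non-generic ones into the final bound at an exponentially small cost. For each generic~$x$, the explicit construction of Section~\ref{bloquante} produces a rigid conjugate $\tilde x$, with $\inf(\tilde x)\geqslant\epsilon$, $\lc(\tilde x)\leqslant l$, and whose normal form contains $P_3(x)$ as a contiguous subword. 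Since Nielsen-Thurston type is a conjugacy invariant, $x$ is pseudo-Anosov iff $\tilde x$ is.

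The core counting step is to bound the number of generic $x$'s that map to a fixed rigid braid~$\tilde x$ via the construction $x\mapsto\tilde x$. Once $\tilde x$ and the position in $\fn(\tilde x)$ at which the subword $P_3(x)$ begins are specified, one extracts $W=\fn(\tilde x)\cdot P_3^{-1}$; by construction, $W$ is an equivalent expression of $P_4'\cdot P_5'\cdot\Delta^{\epsilon}\cdot P_1\cdot P_2$, with $|P_4'\cdot P_5'|=|P_1\cdot P_2|=2\lceil l/5\rceil$, so the split is forced, and $x=\Delta^\epsilon\cdot P_1 \cdot P_2\cdot P_3\cdot P_4'\cdot P_5'$ is recovered. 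Since the starting position of~$P_3$ inside $\fn(\tilde x)$ takes at most $\lc(\tilde x)+1=O(l)$ values, the fibre of the map $x\mapsto\tilde x$ has size at most $O(l)$.

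Now I apply the bound from~\cite{CarusoPA}: the number of non-pseudo-Anosov rigid braids of canonical length~$l'$ is bounded by $C\lambda_0^{l'}$ with $\lambda_0$ strictly smaller than the exponential growth rate $\lambda$ of $|\B_n^{\epsilon,l}|\asymp\lambda^l$. Summing over $l'\leqslant l$, multiplying by the fibre bound of Step~2, and dividing by $|\B_n^{\epsilon,l}|$ yields a proportion of generic non-pseudo-Anosov braids bounded by $O(l)\cdot(\lambda_0/\lambda)^{l}$. Adding the $\muR^l$ from the non-generic part, we obtain the desired bound $\mupA^l$ for any $\mupA>\max(\muR,\lambda_0/\lambda)$ and $l$ sufficiently large. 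The main technical obstacle is verifying the fibre bound rigorously: the product $\Delta^{\epsilon}P_4 P_5 P_1 P_2 P_3$ defining $\tilde x$ is not quite in normal form (the only problematic transition being between $P_5$ and $P_1$, as observed in Section~\ref{bloquante}), so one must check that the $\Delta$-absorptions occurring when computing $\fn(\tilde x)$ do not blur the boundary between $P_4'\cdot P_5'$ and $P_1\cdot P_2$ in a way that makes the reconstruction of~$x$ ambiguous. The non-intrusiveness hypothesis, which guarantees that $P_3$ remains intact as a contiguous subword of $\fn(\tilde x)$ on the right, combined with the confinement of all normal-form rearrangements to the junction (Lemma~\ref{L:RigideCritereSym}), is precisely what makes the reconstruction go through up to the $O(l)$ choice of position.
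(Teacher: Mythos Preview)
Your approach is genuinely different from the paper's, and it has a gap in the fibre bound.

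\medskip

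\textbf{What the paper does.} The paper does not try to understand the map $x\mapsto\tilde x$ at all. Instead it uses a sharper statement from~\cite{CarusoPA} (Proposition~4.5 there): there are two explicit short words in normal form such that any \emph{rigid} braid whose normal form contains both as subwords is pseudo-Anosov. By Lemma~\ref{lem:sousmot}, the proportion of $x\in\B_n^{\epsilon,l}$ whose middle fifth $P_3(x)$ contains both words is at least $1-\muC^l$ for some $\muC\in(0,1)$. Intersecting this set with the set from Proposition~\ref{P:ConjNonintr} (proportion $\geqslant 1-\muR^l$), every $x$ in the intersection has a non-intrusive rigid conjugate $\tilde x$ whose normal form still contains $P_3(x)$, hence both certificate words, hence is pseudo-Anosov. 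Any $\mupA>\max(\muR,\muC)$ then works. The whole point of the adjective ``non-intrusive'' is that one can plant the pseudo-Anosov certificate in~$P_3$ and be sure it survives.

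\medskip

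\textbf{The gap in your argument.} Your key step, ``the split is forced,'' is not justified and is not obviously true. Once you know $\tilde x$ (and $\epsilon$, $l$), the braid $W=\tilde x\cdot P_3^{-1}=\Delta^\epsilon P_4 P_5 P_1 P_2$ is determined, but knowing only the canonical lengths $\lc(P_4'P_5')=\lc(P_1P_2)=2\lceil l/5\rceil$ does not determine the factorisation $W=P_4'P_5'\cdot\Delta^\epsilon\cdot P_1P_2$: a positive braid generally admits many factorisations into two normal-form pieces of prescribed canonical lengths. The extra constraints you could extract (e.g.\ $\init{P_4'P_5'}\subset\final{P_3}$, $\final{P_1P_2}\supset\init{P_3}$, and the conclusions of Lemma~\ref{L:RigideCritereSym} that only $\iota$ and $\phi$ of $P_4P_5P_1P_2$ are pinned down) are far too weak to force uniqueness. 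Your $O(l)$ ``choice of position'' also does no work here: for the explicit $\tilde x$ of Section~\ref{bloquante}, $P_3$ is always the terminal block of $\fn(\tilde x)$, so the position is not a free parameter. You yourself flag this step as ``the main technical obstacle,'' but you do not resolve it, and it is precisely what the paper's approach avoids.

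\medskip

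\textbf{Comparison.} Your route---pushing forward to rigid braids and pulling back via a fibre bound---could in principle be made to work, but it would require a nontrivial structural statement about how much the normal form of $P_4P_5P_1P_2$ remembers the factorisation, going well beyond Observation~\ref{O:RigideCritere}. The paper's route is both shorter and more robust: it exploits the non-intrusiveness directly by carrying the pseudo-Anosov certificate inside the preserved block~$P_3$, so no injectivity or counting argument on the conjugation map is needed.
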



\begin{proof} 
Proposition 4.5 of the paper~\cite{CarusoPA} explains how two theorems, one due to Gonz\'alez-Meneses and Wiest, the other to Bernardete, Guttierez and Nitecki, can be used to prove that the normal form of a rigid braid which is not pseudo-Anosov satisfies some extremely restrictive conditions. 
%
More precisely, there are two words in normal form, one of length~$2$, the other of length~$4$, with the following property: if the normal form of a rigid braid contains both of these words as subwords, then the braid is pseudo-Anosov.

Let us consider the proportion, among the elements $x$ of~$\B_n^{\epsilon,l}$, of braids which contain in their middle fifth~$P_3(x)$ the two subwords mentioned in the previous paragraph. It follows from Lemma~\ref{lem:sousmot} 
that this proportion tends to~$1$ exponentially quickly: there exists a constant~$\muC$ (which depends on $n$) such that this proportion is at least $1-\muC^l$. (The index $M$ in the  notation $\muC$ comes from the word ``middle''.)

We now look at the intersection of two subsets of $\B_n^{\epsilon,l}$:
\begin{enumerate}
\item The braids in $\B_n^{\epsilon,l}$ which can be non-intrusively conjugated to a rigid braid
\item The braids $x$ in $\B_n^{\epsilon,l}$ which contain, in their middle fifth~$P_3(x)$, the two subwords mentioned previously, which stop rigid braids from being reducible or periodic. (We insist that this second subset may well contain reducible braids, but none that are rigid \emph{and} reducible.)
\end{enumerate}

The braids belonging to this intersection are all pseudo-Anosov (in fact they are conjugate to rigid pseudo-Anosov braids). 
Moreover, for $l>0$, 
the proportion of elements of $\B_n^{\epsilon,l}$ which belong to the first subset is at least $1-\muR^l$ by Proposition~\ref{P:ConjNonintr}, and for the second subset the proportion is bounded below by 
$1-\muC^l$. Hence the proportion of elements belonging to the intersection of the two is at least $1-\muR^l-\muC^l$. Thus for any $\mupA$ larger than $\max(\muR,\muC)$, we have the desired result.
This concludes the proof of Lemma~\ref{L:pAparmiBnik}.
\end{proof}

\begin{proof}[Proof of Theorem~\ref{T:main}]
We are going to use three ingredients.

Firstly, we recall from~\cite{CarusoPA} that there exists a number $\g>1$ (which depends on~$n$) with the property that $|\B_n^{\epsilon,k}|=\Theta(\g^k)$, meaning that the sequences $\frac{|\B_n^{\epsilon,k}|}{\g^k}$ and $\frac{\g^k}{|\B_n^{\epsilon,k}|}$ stay bounded as $k$~tends to infinity.

Secondly, as in~\cite{CarusoPA} (Section 4.3), we observe that $\boule l$, the ball of radius~$l$ and center~$1$ in the Cayley graph of~$\B_n$, is the disjoint union 
$$
\boule{l} \ = \ \bigcup_{k=0}^l \ \bigcup_{i=-l}^{l-k} \ \B_n^{\epsilon,k}
$$
(This observation hinges on the fact that braids in so-called \emph{mixed normal form} are geodesics, which is proven in~\cite{CM}.)

Thirdly, Lemma~\ref{L:pAparmiBnik} ensures that among the elements of every $\B_n^{\epsilon,k}$, the proportion of braids not admitting a non-intrusive conjugation to a rigid pseudo-Anosov braid is an $O(\mupA^k)$, for a certain number~$\mupA$ with $0<\mupA<1$. 

The last two ingredients together imply that the total number of braids in $\boule l$ which cannot be non-intrusively conjugated to a rigid pseudo-Anosov braid is a
$$
O\left((2l+1) + 2l\cdot(\g\cdot\mupA)^1 + (2l-1)\cdot(\g\cdot\mupA)^2 + \cdots + (2l-l+1)\cdot(\g\cdot\mupA)^l \right)
$$
Therefore, the proportion in~$\boule l$ of elements which cannot be non-intrusively conjugated to a rigid pseudo-Anosov braid is a
$$
O\left(\frac{2l+1}{\lambda^l}+\frac{2l\cdot \mupA}{\lambda^{l-1}}+\frac{(2l-1)\cdot\mupA^2}{\lambda^{l-2}}+\ldots+\frac{(2l-l+1)\cdot\mupA^l}{1} \right)
$$
$$
\leqslant O\left((l+1)\cdot (2l+1)\cdot\left(\max (\textstyle{\frac{1}{\lambda}},\mupA )\right)^l\right)
$$
and thus, for any $\varepsilon>0$, a 
$$
O\left( \left(\max (\textstyle{\frac{1}{\lambda}},\mupA) +\varepsilon\right)^l \right).
$$
Choosing $\varepsilon$ so small that $\max (\textstyle{\frac{1}{\lambda}},\mupA ) + \varepsilon<1$ yields the result.
\end{proof}

\section{Fast solutions to the conjugacy problem}\label{S:ConjugRapide}

The aim of this section is to prove that ``generically, the conjugacy problem in~$B_n$ can be solved in quadratic time''. First we recall a standard method for solving the conjugacy problem in braid groups. In order to decide whether two given braids $x_1$ and $x_2$ are conjugate, one calculates a certain finite subset $SC(x_i)$ of the conjugacy class of~$x_i$, for $i=1,2$. We shall not need the precise definition of this subset, called the ``sliding circuit set'' $SC(x)$ of a braid~$x$, we only need to know two things about it:
\begin{itemize}
\item The set $SC(x)$ depends only on the conjugacy class of~$x$, and it is always non-empty.
\item If the conjugacy class of~$x$ contains a rigid braid, then $SC(x)$ consists precisely of the rigid conjugates of $x$ \cite{GGM}.
\end{itemize}

Now in order to decide whether $x_1$ and $x_2$ are conjugate, it suffices to test if an arbitrarily chosen element of $SC(x_1)$ is contained in $SC(x_2)$.

Our aim is to show that for a ``generic'' element $x$ of $\B_n$, we can calculate the set $SC(x)$ in polynomial time.

\begin{remark}\label{remarque:orbiteSC}
We remark that for a rigid braid $x_r$, the set of rigid conjugates $SC(x_r)$ contains at least the orbit of~$x_r$
\begin{itemize}
\item under $\tau$, i.e.\ under conjugation by $\Delta$, and
\item under cyclic permutation of the factors other than $\Delta$.
\end{itemize}
This orbit has at most $2\cdot \lc(x_r)$ elements.
We will see later that for a ``generic'' braid, the set of rigid conjugates contains exactly \emph{one} such orbit.
\end{remark}

\begin{theorem}\label{T:ConjugRapide}
There exists an algorithm which takes as its input a braid $x\in\B_n$, whose running time is $O(\lc(x)^2)$, and which outputs
\begin{enumerate}
\item either a rigid conjugate of~$x$, equipped with a certificate that the set of rigid conjugates of~$x$ contains only its orbit under the action of~$\tau$ and under cyclic permutation of the factors (other than $\Delta$),
\item or the answer ``I don't know''.
\end{enumerate}
Among the elements in the ball of radius~$l$ and center~$1$ in the Cayley graph of~$\B_n$, the proportion of braids in case {\rm (2)} tends to~$0$ exponentially fast as $l$~tends to infinity.
\end{theorem}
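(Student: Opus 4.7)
The plan is to design an algorithm that runs the cyclic non-intrusive conjugation from Section~\ref{bloquante}, produces a candidate rigid conjugate $\tilde x$, and then verifies a set of combinatorial conditions on $\tilde x$ which simultaneously certify rigidity, the pseudo-Anosov property, and that the cyclic-plus-$\tau$ orbit exhausts $SC(\tilde x)$.

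Concretely, on input $x\in\B_n$ I would first compute the normal form $\Delta^\epsilon x_1\cdots x_l$ in time $O(l^2)$, form the cyclic rearrangement $\tilde x = \Delta^\epsilon P_4 P_5 P_1 P_2 P_3$ of Section~\ref{bloquante}, and renormalize it, again in $O(l^2)$. I would then test the following four conditions: (i) $\tilde x$ is rigid; (ii) the segments $P_2(\tilde x)$ and $\partial P_4(\tilde x)$ each contain an occurrence of the blocking braid $\arobase$ of Lemma~\ref{L:BloquExiste}; (iii) $P_1(\tilde x)$ is not a prefix of $\partial P_5(\tilde x)$, and symmetrically $P_5(\tilde x)$ is not a prefix of $\partial P_1(\tilde x)$; (iv) the subword $P_3(\tilde x)$ contains the two fixed witness subwords (of lengths $2$ and $4$) from the proof of Lemma~\ref{L:pAparmiBnik} which together force a rigid braid to be pseudo-Anosov. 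If all four conditions hold I would output $\tilde x$ together with the locations of these subwords and blocking braids as the certificate; otherwise return ``I don't know''. Each test is a scan of the normal form for a fixed-length pattern or a constant number of $\preccurlyeq$-comparisons on a word of length $l$, so the total running time is $O(l^2)=O(\lc(x)^2)$.

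Correctness of the certificate would be established in two stages. First, condition~(iv) forces $\tilde x$ to be pseudo-Anosov, by the combination of the Bernardete-Gutierrez-Nitecki and Gonz\'alez-Meneses-Wiest theorems recalled in the proof of Lemma~\ref{L:pAparmiBnik}; hence $SC(\tilde x)$ consists exactly of the rigid conjugates of $\tilde x$, and contains the claimed cyclic/$\tau$ orbit by Remark~\ref{remarque:orbiteSC}. Second, for the converse inclusion I would use a two-sided variant of Lemma~\ref{L:ProprBloqu}: for any putative conjugator $u$ with $u\inv \tilde x u$ rigid and not already in the orbit, the process of bringing $u\inv\tilde x u$ to normal form would have to propagate rewriting across the blocking braid in $P_2(\tilde x)$ (or across the symmetric one in $\partial P_4(\tilde x)$), and condition~(iii) rules out the only remaining escape route, namely the complete absorption of a fifth-length block into a $\Delta$-power, exactly as in the proof of Proposition~\ref{P:ConjNonintr}. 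Together these force $u$ to act by a cyclic rotation of the non-$\Delta$ factors of $\tilde x$, possibly composed with $\tau$.

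For the genericity statement, each of the four conditions fails on an exponentially small subset of each $\B_n^{\epsilon,k}$: condition~(i) by Proposition~\ref{P:ConjNonintr}, conditions~(ii) and~(iv) by Lemma~\ref{lem:sousmot} applied to the fixed subwords, and condition~(iii) by Lemma~\ref{L:PrefixRare}. Summing over the decomposition $\boule l = \bigsqcup_{k=0}^l\bigsqcup_{\epsilon}\B_n^{\epsilon,k}$ and using the growth estimate $|\B_n^{\epsilon,k}|=\Theta(\lambda^k)$, the same elementary computation as at the end of the proof of Theorem~\ref{T:main} gives an exponentially decaying bound on the proportion of ``I don't know'' answers in $\boule l$. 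The hard part will be implementing the two-sided blocking argument sketched in the previous paragraph with full rigor, since ruling out \emph{all} simple conjugators that could a priori produce a rigid conjugate outside the orbit requires a somewhat stronger propagation statement than Lemma~\ref{L:ProprBloqu} alone provides.
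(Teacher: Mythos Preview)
Your algorithm matches the paper's in its first stage (the cyclic rearrangement and the rigidity test via Observation~\ref{O:RigideCritere}), but the certificate you propose for the single-orbit property has a genuine gap, which you yourself flag. The blocking-braid machinery of Section~\ref{bloquante} controls one \emph{specific} conjugation, namely the cyclic one from $x$ to $\tilde x$; it says nothing about arbitrary conjugators, and there is no ``two-sided variant of Lemma~\ref{L:ProprBloqu}'' available that would let you conclude that every rigid conjugate of $\tilde x$ lies in the cyclic/$\tau$ orbit. Promoting your conditions~(ii)--(iii) into such a certificate would be a new and nontrivial theorem, not a routine strengthening of the propagation lemma.

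The paper sidesteps this difficulty entirely with a much simpler idea. In place of your conditions~(ii)--(iv), it tests only whether $P_3$ contains the specific length-$2$ subword $(\Delta\sigma_2^{-1})\cdot\sigma_1$. If so, a further cyclic rotation of the rigid output produces a rigid braid $z$ with $\iota(z)\in\{\sigma_1,\sigma_{n-1}\}$ and $\partial\phi(z)\in\{\sigma_2,\sigma_{n-2}\}$. A known criterion (essentially Lemma~2.4 of~\cite{CarusoSSS}) says that $SC(z)$ is a single cyclic/$\tau$ orbit provided that conjugation by any nontrivial \emph{strict} prefix of $\iota(z)$ or of $\partial\phi(z)$ fails to land in $SC(z)$; since atoms have no such prefixes, the condition is vacuous and the certificate is automatic. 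Genericity of this single subword test follows directly from Lemma~\ref{lem:sousmot}, so neither the blocking-braid occurrences nor the pseudo-Anosov witnesses are needed in the algorithm at all.
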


\begin{proof}
As in section~\ref{bloquante}, we cut the braid $x$ into $5$ pieces $P_1$, $P_2$, $P_3$, $P'_4$ and $P'_5$, and we denote $P_4 = \tau^{\inf x} (P'_4)$ and $P_5 = \tau^{\inf x}(P'_5)$. We denote $P_{12} = P_1 \cdot P_2$ and $P_{45} = P_4 \cdot P_5$. (In fact, in order to describe the algorithmic procedure, it would be sufficient to cut the braid into only $3$ pieces, but for explaining why the algorithm works it is more convenient to retain the notation of the previous sections.) Then we execute the following operations:
\begin{enumerate}
\item calculate $\fn(P_{45} P_{12})$ ;
\item test whether $\iota(P_{45} P_{12}) = \iota(P_{45})$. If this is false, answer ``I don't know'' and stop. If it is true, continue;
\item test whether $\phi(P_{45} P_{12}) = \phi(P_{12})$. If this is false, answer ``I don't know'' and stop. If it is true, continue;
\item test whether $P_3$ contains in its normal form the subword $(\Delta \sigma_2^{-1}) \cdot \sigma_1$. If this is false, answer ``I don't know'' and stop. If it is true, continue;
\item output $\Delta^{\inf x} P_{45} P_{12} P_3$.
\end{enumerate}
Tests (2) and (3) check whether the conditions of Observation~\ref{O:RigideCritere} hold for the braid~$x$. If they do, then the braid $y = \Delta^{\inf x} P_{45} P_{12} P_3$ is indeed a rigid conjugate of~$x$, and moreover there is a non-intrusive conjugation of $x$ to $y$. 
Let us now suppose that $x$ passes the test (4). Since the conjugation is non-intrusive, $y$ also contains the subword $(\Delta \sigma_2^{-1}) \cdot \sigma_1$. After a further cyclic permutation of the factors of $y$, we obtain a rigid braid~$z$ with $\iota(z) = \sigma_1$ and $\phi(z) = (\Delta \sigma_2^{-1})$, or possibly $\iota(z) =\sigma_{n-1}$ and $\phi(z)=\Delta \sigma_{n-2}^{-1}$.

We claim that under these circumstances the set $SC(z)$ consists only of the single orbit defined in Remark~\ref{remarque:orbiteSC}.
The proof of this claim is essentially the same as the proof of Lemma 2.4 in\cite{CarusoSSS}: it suffices to prove that conjugating~$z$ by any strict prefix of $\iota(z)$ or of $\partial\phi(z)$ never yields an element of $SC(z)$. That, however, is a tautology: neither $\iota(z)=\sigma_1$ or $\sigma_{n-1}$, nor $\partial\phi(z) = \sigma_2$ or $\sigma_{n-2}$ have any strict prefixes!

This proves that the algorithm only gives the answers described in Theorem~\ref{T:ConjugRapide}.

Let us now study the complexity of this algorithm. According to~\cite{EM}, calculating the normal form $\fn(P_{45}P_{12})$ has computational complexity $O(\lc(P_{45}P_{12})^2) = O(\lc(x)^2)$. The tests (2) and (3) are carried out in constant time, and test (4) in linear time. Thus the total complexity of the algorithm is indeed $O(\lc(x)^2)$.

Finally, we have to prove that the proportion of braids for which the algorithm answers ``I don't know'' tends to zero exponentially fast as $l$~tends to infinity. This is a consequence of the properties shown in Section~\ref{bloquante}: the proportion of braids in the ball of radius~$l$ and center~$1$ in the Cayley graph satisfying the hypotheses of Observation~\ref{O:RigideCritere} (i.e.\ tests (2) and (3)) goes to $1$ exponentially quickly as $l$ goes to infinity. According to Lemma~\ref{lem:sousmot}, the same is true for the proportion of braids passing test~(4). In summary, the proportion of braids failing one of the tests (2), (3), or (4), and thus generating an answer ``I don't know'', tends to zero exponentially quickly.
\end{proof}

\begin{remark} 
In practice, test (4) should be replaced by ``test whether $P_3$ contains in its normal form a subword of the form $(\Delta \sigma_j^{-1}) \cdot \sigma_i$, $i \neq j$''. This would not change the algorithm's $O(\lc(x)^2)$ complexity, and it would further increase the proportion of braids for which the algorithm outputs a rigid conjugate, rather than answering ``I don't know''.
\end{remark}

\section{Further consequences and questions}\label{consequences}

\subsection{Balls containing only pseudo-Anosov braids}

\begin{corollary}
For every positive integer~$l$, there exists a vertex~$x$ in the Cayley graph of~$\B_n$ such that the ball of radius~$l$ centered in~$x$ contains only pseudo-Anosov elements.
\end{corollary}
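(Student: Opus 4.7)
The plan is a straightforward density/counting argument. First I reformulate: in the Cayley graph, the ball of radius~$l$ centered at a vertex~$x$ is exactly $x\cdot\boule{l}$, so the statement is equivalent to producing some $x\in\B_n$ such that every element of $x\cdot\boule{l}$ is pseudo-Anosov. The idea is to count, inside a much bigger ball $\boule{L}$, the centers that fail this property and show that they form a vanishing fraction.

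Fix $l$ and consider a large parameter $L\gg l$. Let $N_L\subseteq\boule{L}$ be the set of non-pseudo-Anosov braids in $\boule{L}$. By Theorem~\ref{T:main}, there is a constant $\eta\in(0,1)$ (depending on~$n$) such that for all sufficiently large~$L$,
\[
|N_L| \;\leqslant\; \eta^{L}\,|\boule{L}|.
\]
For any $x\in\boule{L-l}$, the triangle inequality gives $x\cdot\boule{l}\subseteq\boule{L}$. Call such an $x$ \emph{bad} if $x\cdot\boule{l}\cap N_L\neq\emptyset$, i.e.\ if $x\in N_L\cdot\boule{l}^{-1}$. Since inversion is a length-preserving bijection of~$\B_n$ we have $|\boule{l}^{-1}|=|\boule{l}|$, and hence the number of bad centers in~$\boule{L-l}$ is at most $|N_L|\cdot|\boule{l}|$.

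To finish, I invoke the ball-growth estimates already used in the proof of Theorem~\ref{T:main}: $|\B_n^{\varepsilon,k}|=\Theta(\lambda^k)$ for some $\lambda>1$, and the decomposition $\boule{L}=\bigcup_{k=0}^{L}\bigcup_{\varepsilon=-L}^{L-k}\B_n^{\varepsilon,k}$ gives $|\boule{L}|=\Theta(L\lambda^{L})$, and similarly $|\boule{L-l}|=\Theta((L-l)\lambda^{L-l})$ and $|\boule{l}|=\Theta(l\lambda^{l})$. Therefore the proportion of bad centers in $\boule{L-l}$ satisfies
\[
\frac{|N_L|\cdot|\boule{l}|}{|\boule{L-l}|}
\;=\; O\!\left(\frac{L\,l}{L-l}\,\eta^{L}\,\lambda^{2l}\right),
\]
which, for $l$ fixed, tends to $0$ as $L\to\infty$. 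Choosing $L$ large enough that this ratio is strictly less than~$1$ produces a center $x\in\boule{L-l}$ whose ball of radius~$l$ is disjoint from $N_L$, and thus consists entirely of pseudo-Anosov braids.

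The only point requiring attention is checking that the genuine exponential decay~$\eta^{L}$ provided by Theorem~\ref{T:main} dominates the polynomial-in-$L$ factors and the fixed constant~$\lambda^{2l}$ (depending only on~$l$); this is automatic since $\eta<1$ and $L$ can be taken arbitrarily large. No further ingredient beyond Theorem~\ref{T:main} and the standard Garside-theoretic growth asymptotics recalled in Sections~\ref{definitions} and~\ref{genericite} is needed.
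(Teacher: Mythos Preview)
Your argument is correct and is essentially the same covering/counting argument as the paper's proof: both show that the $l$-neighbourhood of the non-pseudo-Anosov set inside a large ball cannot exhaust the slightly smaller ball, the paper phrasing this by contradiction and you directly. One minor remark: the paper's version is a touch leaner, since it only uses that the proportion of non-pseudo-Anosov braids tends to~$0$ together with exponential growth of balls, whereas you invoke the full exponential decay rate and the explicit $\Theta(L\lambda^L)$ asymptotics---both unnecessary here, though certainly valid.
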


\begin{proof}
Let us suppose, on the contrary, there is some number $l$ such that the whole Cayley graph is covered by $l$-balls around non pseudo-Anosov elements. This would mean that together, the $l$-balls centered on the non pseudo-Anosov elements in $\boule R$, the $R$-ball with center $1,$ cover the $(R-l)$-ball $\boule{R-l}$, for arbitrarily large $R$. (Notice that they would not necessarily cover the whole $R$-ball $\boule R$, because points that are $l$-close to its boundary might be covered by $l$-balls that are centered outside $\boule R$.)
We deduce that
$$
\card{\beta \in \boule R, \beta \text{ non pseudo-Anosov}}\cdot \card{\boule l}\geqslant \card{\boule{R-l}}
$$
and therefore
$$
\frac{\card{\beta \in \boule R, \beta \text{ non pseudo-Anosov}}}{ \card{\boule{R}}}\geqslant \frac{1}{ \card{\boule{l}}}\cdot \frac{ \card{\boule{R-l}}}{ \card{\boule{R}}}.
$$
When $l$ is fixed and $R$ tends to infinity, the right hand side remains bounded below by a positive number, because the braid group is of exponential growth. This is in contradiction with Theorem~\ref{T:main}.
\end{proof}

We are grateful to Alessandro Sisto for pointing this corollary out to us. We have since learned from Saul Schleimer that this result was actually already known to certain specialists: it can also be proven by studying the action of~$\B_n$ on Thurston's compactification of Teichmüller space.

\subsection{The closure of a generic braid is a hyperbolic link}

\begin{theorem}
Consider the ball $\boule l$ of radius~$l$ and center $1$ in the Cayley graph of the braid group~$\B_n$, with generators the simple braids. Then, among the elements of this ball, the proportion of braids whose closure is a hyperbolic link tends to~1 as $l$ tends to infinity.
\end{theorem}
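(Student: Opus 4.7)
The plan is to combine Theorem~\ref{T:main} with Thurston's geometrization theorem: a link $L\subset S^3$ is hyperbolic if and only if its complement is irreducible, atoroidal, and not Seifert fibered. Since by Theorem~\ref{T:main} a generic braid $x\in\boule l$ is non-intrusively conjugate to a rigid pseudo-Anosov braid, it suffices to establish the corresponding statement for such braids. I would show that generically the closure~$\hat x$ is neither a split nor composite link, not a torus link, and not a non-trivial satellite link.

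The pseudo-Anosov property rules out torus links and split closures almost for free: a split closure forces the braid to be reducible, and a closure of braid index~$n$ which is a torus link of braid index strictly less than~$n$ must be a Markov stabilization of a periodic braid, hence reducible. Composite closures require a little more work: an essential 2-sphere meeting~$\hat x$ in two points, put into standard position with respect to the braid axis via Birman--Menasco, forces a non-trivial reduction of~$x$. Satellite closures are the essential case and the main difficulty: if $\hat x$ admits an essential companion torus $T\subset S^3\setminus\hat x$, the Birman--Menasco analysis of essential tori in closed-braid complements places~$T$ in a standard position transverse to the braid axis in which $T$ meets each fiber disk in disjoint simple closed curves bounding subdisks of the fiber. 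This imposes a strong block structure on the normal form of~$x$, essentially a factorization of a long subword through a parabolic subgroup fixing a splitting of the punctures.

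The final step is to translate this block structure into a finite list of ``forbidden'' subwords $w_1,\dots,w_N$ in left normal form, with the property that the presence of any~$w_i$ in the middle fifth~$P_3(x)$ is incompatible with a satellite closure (as well as with the composite and torus-link cases). By Lemma~\ref{lem:sousmot}, the proportion of braids in $\B_n^{\epsilon,l}$ containing at least one~$w_i$ in~$P_3(x)$ tends to~$1$ exponentially fast. Intersecting this with the set of braids that are non-intrusively conjugate to a rigid pseudo-Anosov braid (Theorem~\ref{T:main}), and summing over~$\epsilon$ and~$l$ exactly as in the proof of Theorem~\ref{T:main}, then yields the claim.

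The main expected obstacle is the translation step: extracting, from the Birman--Menasco block structure, explicit witness subwords short enough to feed into Lemma~\ref{lem:sousmot}. The topological content is classical, but identifying concrete forbidden words in normal form (rather than, for instance, in some non-canonical conjugate) may require a careful analysis analogous to the one used in Lemma~\ref{L:pAparmiBnik}. A potential alternative route would be to view $\hat x$ as obtained from the (hyperbolic) complement $S^3\setminus(\hat x\cup A)$ by meridian Dehn filling of the axis cusp, and to apply the $6$-theorem of Agol and Lackenby after showing that the meridian length in the axis cusp grows with~$l$ for generic~$x$; but this too requires an effective cusp-geometry estimate, which is itself a non-trivial ingredient.
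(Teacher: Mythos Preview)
Your proposal is not a proof but a plan, and its central step---the ``translation step'' from the Birman--Menasco normal form of an essential torus into a finite list of forbidden subwords in Garside normal form---is left entirely undone. You yourself flag this as the main obstacle, and it is a serious one: the companion torus is a feature of the \emph{closure}, a conjugacy invariant, and there is no obvious reason why its presence should be detectable by the absence of a fixed finite list of subwords in the left normal form of~$x$ itself (as opposed to some conjugate of~$x$). The analogy with Lemma~\ref{L:pAparmiBnik} is misleading: there, the forbidden subwords witness reducibility of a \emph{rigid} braid, and rigidity is exactly what pins the normal form down under conjugation; no such rigidity is available for the satellite condition on the closure.

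The paper's proof bypasses all of this by invoking a theorem of Ito~\cite{Ito}: if $x$ is pseudo-Anosov and does \emph{not} satisfy $\Delta^{-4}<x<\Delta^4$ in Dehornoy's order, then $\hat x$ is hyperbolic. (Ito's proof is precisely the braid-foliation analysis you were reaching for, already packaged.) With this in hand, Theorem~\ref{T:main} reduces the problem to showing that the proportion of braids in~$\boule l$ lying in the order interval $(\Delta^{-4},\Delta^4)$ tends to~$0$. That is a one-line counting argument: writing $\boule l$ as the disjoint union of the $\Delta^i x$ with $x\in\B_n^{0,k}$, $0\le k\le l$, $-l\le i\le l-k$, and using that multiplication by~$\Delta$ shifts the Dehornoy interval $(\Delta^{j-1},\Delta^j)$ to $(\Delta^j,\Delta^{j+1})$, one sees that for each fixed~$x$ at most five of the $2l-k+1$ translates $\Delta^i x$ land in $(\Delta^{-4},\Delta^4)$. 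So the missing ingredient in your approach is not a new idea but a citation.
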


\begin{proof}
A theorem of T.~Ito~\cite{Ito} states that a pseudo-Anosov braid $x$ which in Dehornoy's total order of the braid group \cite{DDRW} does not satisfy $\Delta^{-4}< x <\Delta^4$, has the property that its closure is a hyperbolic link. Thus by our main theorem~\ref{T:main}, it suffices to prove that, among the elements of~$\boule l$, the proportion of braids lying between $\Delta^{-4}$ and $\Delta^4$ in Dehornoy's order tends to~$0$ as $l$~tends to infinity. 

In order to do so, we recall that if a braid~$x$ satisfies $\Delta^{j-1}<x<\Delta^j$, then $\Delta x$ satisfies $\Delta^j<\Delta x<\Delta^{j+1}$. Now the $l$-ball in~$\B_n$ is the disjoint union 
$$
\boule{l} \ = \ \bigcup_{k=0}^l \ \bigcup_{x\in \B_n^{0,k}} \ \bigcup_{i=-l}^{l-k} \ \Delta^i x
$$
We conclude with the observation that, among the $2l-k+1$ elements $\Delta^i x$, with $-l\leqslant i\leqslant l-k$, there are at most five lying between $\Delta^{-4}$ and $\Delta^4$.
\end{proof}

\subsection{Questions}

It would be useful to extend our results to a much more general framework. From our proof, it is not even clear that Theorem~\ref{T:main} remains true if we replace Garside's generators with any other finite generating set, or if we replace $\B_n$ by a finite index subgroup (e.g.\ the pure braid group), or by its commutator subgroup, which is the kernel of the homomorphism $B_n\to \Z$ sending every Artin generator to~$1$.

For a start, one could try to adapt our arguments to the setting of general mapping class groups, equipped with Hamenstädt's bi-automatic structure~\cite{HamenstaedtBiAut}.

We conjecture that the analogue, for our notion of ``genericity'', of the main result of Sisto~\cite{sisto} holds. Specifically, let $G$ be a nonelementary group, equipped with a finite generating set and acting on a $\delta$-hyperbolic complex, where at least one element of~$G$ acts weakly properly discontinuously (WPD). Then we conjecture that the proportion of elements in the $l$-ball of the Cayley graph of~$G$ with a WPD action tends to~$1$ exponentially quickly as $l$~tends to infinity.
\bigskip

{\bf Acknowledgement } We thanks François Digne and Juan González-Menses for many helpful and detailed comments on earlier versions of this article.

\end{document}